\documentclass[11pt]{article}
\usepackage{amssymb,amsfonts,amsmath,latexsym,epsf,tikz,url}
\usepackage[usenames,dvipsnames]{pstricks}
\usepackage{pstricks-add}
\usepackage{epsfig}
\usepackage{pst-grad} % For gradients
\usepackage{pst-plot} % For axes
\usepackage[space]{grffile} % For spaces in paths
\usepackage{etoolbox} % For spaces in paths
\makeatletter % For spaces in paths
\patchcmd\Gread@eps{\@inputcheck#1 }{\@inputcheck"#1"\relax}{}{}
\makeatother

\newtheorem{theorem}{Theorem}[section]
\newtheorem{proposition}[theorem]{Proposition}
\newtheorem{observation}[theorem]{Observation}

\newtheorem{corollary}[theorem]{Corollary}

\newcommand{\qed}{\hfill $\square$\medskip}

\textwidth 14.5cm
\textheight 21.0cm
\oddsidemargin 0.4cm
\evensidemargin 0.4cm
\voffset -1cm

\begin{document}

\title{Independent domination stability in graphs}

\author{ \small 	
		S. Alikhani$^{1}$, M. Mehraban$^{1}$,
A. Zakharov$^{2,3}$,
 H.R. Golmohammadi$^{2,3}$ 
}

\date{\today}

\maketitle

\begin{center}
	
$^1$Department of Mathematical Sciences, Yazd University, 89195-741, Yazd, Iran

$^{2}$Novosibirsk State University, Pirogova str. 2, Novosibirsk, 630090, Russia 

	$^3$Sobolev Institute of Mathematics, Ak. Koptyug av. 4, Novosibirsk,
	630090, Russia

	\bigskip
	{\tt alikhani@yazd.ac.ir  ~~Mazharmehraban2020@gmail.com ~~ a.zakharov3@g.nsu.ru ~~h.golmohammadi@g.nsu.ru}

\end{center}

%%%%%%%%%%%%%%ABSTRACT%%%%%%%%%%%%%%%%%%%%%%%%%%%%%%%%%%%%%%%%%%%%%%%%%%%%%%%%%%%%
\begin{abstract}
  A non-empty set $S\subseteq V (G)$ of the simple graph $G=(V(G),E(G))$ is an independent dominating
 set of $G$ if every vertex not in $S$ is adjacent with some vertex in $S$ and the vertices of $S$ are pairwise non-adjacent.  The independent domination number of $G$, denoted by $\gamma_i(G)$, is the minimum size of all independent dominating sets of $G$. The independent domination stability, or simply $id$-stability of $G$ is the minimum number of vertices whose removal changes the independent domination number of $G$.
 In this paper, we investigate properties of independent domination stability in graphs. In particular, we obtain several bounds and obtain  the independent domination stability of some operations of two graphs. 
\end{abstract}

\noindent{\bf Keywords:} dominating set, independent domination number, stability, operation.

\medskip
\noindent{\bf AMS Subj.\ Class.}:  05C05, 05C69.
%%%%%%%%%%%%%%%%%%%%%%%%%%%%%%%%%%%%%%%%%%%%%%%%%%%%%%%%%%%%%%%%%%%%%%%%%%%%%%%%%
%%%%%%%%%%%%%%%%%%%%%%%%%%%%%%%%%%%%%%%%%%%%%%%%%%%%%%%%%%%%%%%%%%%%%%%%%%%%%%%%%
\section{Introduction}

The various different domination concepts are
well-studied now, however new concepts are introduced frequently and the interest is growing
rapidly. We recommend two fundamental books \cite{8,9}. 
 Let $G=(V(G),E(G))$ be a simple of finite orders graph, i.e., graphs are undirected with no loops or
parallel edges and with finite number of vertices.   
For any vertex $v\in V(G)$, the  open neighborhood of $v$ is the
set $N(v)=\{u \in V (G) | uv\in E(G)\}$ and the  closed neighborhood of $v$
is the set $N[v]=N(v)\cup \{v\}$. For a set $S\subseteq V(G)$, the open
neighborhood of $S$ is $N(S)=\bigcup_{v\in S} N(v)$ and the closed neighborhood of $S$
is $N[S]=N(S)\cup S$. The private neighborhood $pn(v,S)$ of $v\in S$ is defined by 
$pn(v,S)=N(v)-N(S-\{v\})$. Equivalently, $pn(v,S)=\{u\in V| N(u)\cap S=\{v\}\}$. Each vertex in
$pn(v,S)$ is called a private neighbor of $v$. The degree of a vertex $v$ is denoted by $\deg(v)$ and is equal to $|N(v)|$.   A leaf of a tree is a vertex of degree 1, while a support vertex is a vertex adjacent to a leaf. A double star is a tree that contains exactly two vertices that are not
end-vertices; necessarily, these two vertices are adjacent. 
  A non-empty set $S\subseteq V(G)$ is a  dominating set if $N[S]=V$, or equivalently,
every vertex in $V(G)\backslash S$ is adjacent to at least one vertex in $S$ and  the minimum cardinality of all dominating sets of $G$ is called  the  domination number of $G$ and is denoted by $\gamma(G)$. A domination-critical vertex in a graph $G$ is a vertex whose removal decreases the domination number. It is easy to observe that for any graph $G$ we have $\gamma(G)-1\leq \gamma(G+e)\leq \gamma(G)$
for every edge $e\not\in E(G)$. Sumner and Blitch in \cite{14} have defined domination critical
graphs. A graph $G$ is said to be domination critical, or $\gamma$-critical, if
$\gamma(G+e) =\gamma(G)-1$ for every edge $e$ in the complement $G^c$ of $G$.
A graph is said to be domination stable, or $\gamma$-stable, if $\gamma(G) =\gamma(G+e)$ for
every edge $e$ in the complement $G^c$ of $G$. For detailed information and results regarding the concept of domination critical graphs, we refer interested readers to the papers \cite{5,14,15}. Bauer et al introduced the concept of domination stability in graphs in 1983 \cite{3}. After then, was studied by Rad, Sharifi and  Krzywkowski in \cite{11}. Stability for different types of domination parameters has been investigated in the literature, for example, in \cite{2,6,10,12,wcds}. This subject has considered and studied for another graph parameters. For example see \cite{saeid, saeid2}.   

Recall that independent set in a graph $G$ is a set of pairwise non-adjacent vertices. A maximum independent set in $G$ is a largest independent set and its size is called independence number of $G$ and is denoted by $\alpha(G)$. An independent dominating set of $G$ is a set that is both dominating and independent in $G$. Equivalently, an independent dominating set is a maximal independent set. The independent domination number of $G$, denoted by $\gamma_i(G)$, is the minimum size of all independent dominating sets of $G$.  An independent dominating set of cardinality $\gamma_i(G)$ is called a $\gamma_i$-set. 
 A graph $G$ is independent domination critical, or $i$-critical if $\gamma_i(G-v)\neq \gamma_i(G)$ for every $v\in V(G)$. The independent domination and independent domination critical graphs have been reported in \cite{4,7,13,16}.
This paper is organized as follows. 

In the next section, we introduce the  independent
domination stability ($id$-stability) and  present some of its properties.  In Section 3, we obtain  some bounds for the independent domination stability of graphs. In Section 4, we study the independent domination stability of some operations of two graphs.

\section{ Introduction to $id$-stability}

In this section, we first introduce independent version of domination
stability. The independent
domination stability, or $id$-stability of a graph $G$ is the minimum size of a
set of vertices whose removal changes the independent domination number. To establish a standard notation, we denote the independent domination stability by $st_{id}(G)$. The $i^-$-stability of $G$, denoted by $st_{id}^-(G)$, is the minimum number of vertices whose removal decreases the independent domination number. Likewise, the $i^+$-stability of $G$, denoted by $st_{id}^+(G)$, is the minimum number of vertices whose removal increases the independent domination
number. Thus the independent domination stability of a graph $G$ is 
$st_{id}(G)= \min \{st_{id}^-(G), st_{id}^+(G)\}$. Note that considering the null graph $K_0$, which has no vertices, as a graph, the independent domination stability of a non-trivial graph is always defined. For instance, $st_{id}(G)=|V(G)|$, occurs if and only if $G=K_n$.

Now we  determine domination stability for some classes of graphs. To aid our discussion, we first state some straightforward observations as follows.

\begin{observation}
	\begin{enumerate}
		\item [(i)] 
If $G$ is  a star or a double star, then $st_{id}(G)=1$.
\item[(ii)] 
If $K_{m,n}$ is  a bipartite complete graph where $m\ge n\ge 2$, then $st_{id}(K_{m,n})=1$.

	\end{enumerate}
\end{observation}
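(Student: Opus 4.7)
In both parts the strategy is the same: compute $\gamma_i(G)$ explicitly, then exhibit one carefully chosen vertex whose deletion changes this number. This immediately gives $st_{id}(G)\le 1$, and the reverse inequality is free since removing no vertex changes nothing.

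For the star $K_{1,n}$ with $n\ge 2$, we have $\gamma_i(K_{1,n})=1$, realized by the center. Deleting the center leaves $n$ isolated vertices, whose only independent dominating set is the entire vertex set, so $\gamma_i$ jumps to $n\ge 2$. Thus one deletion already increases $\gamma_i$, giving $st_{id}^+(K_{1,n})\le 1$.

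For a double star $S_{p,q}$ with adjacent centers $u,v$ and $p\le q$ leaves at $u$, $q$ leaves at $v$ (both $\ge 1$), we first claim $\gamma_i(S_{p,q})=p+1$. The key point is that $u$ and $v$ cannot both lie in an independent dominating set $S$ (they are adjacent), and each leaf is either in $S$ or has its neighboring center in $S$. Enumerating which of $u,v$ lies in $S$ produces the three candidate sizes $p+1$, $q+1$, $p+q$, whose minimum is $p+1$. Now delete one leaf adjacent to $u$: if $p\ge 2$ the resulting graph is $S_{p-1,q}$ with $\gamma_i=p$, and if $p=1$ it is the star $K_{1,q+1}$ with $\gamma_i=1<2$. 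Either way $\gamma_i$ strictly decreases, so $st_{id}^-(S_{p,q})\le 1$.

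For part (ii), any independent set in $K_{m,n}$ lies inside one color class, and must equal that class in order to dominate the rest of it. Hence $\gamma_i(K_{m,n})=\min(m,n)=n$. Removing a single vertex from the side of size $n$ yields $K_{m,n-1}$ (with $m\ge n-1\ge 1$), whose independent domination number is $n-1$, so $st_{id}^-(K_{m,n})\le 1$. The only step that needs any genuine care is the double-star calculation, since the tempting set $\{u,v\}$ fails to be independent; once one has properly enumerated the candidates the rest is a quick verification.
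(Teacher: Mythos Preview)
Your argument is correct. The paper itself offers no proof of this observation---it is simply stated as ``straightforward''---so there is nothing to compare against; your explicit computations of $\gamma_i$ for stars, double stars, and complete bipartite graphs, together with the exhibited single-vertex deletions, are exactly the kind of verification the paper leaves to the reader. One small remark: you tacitly (and correctly) restrict to $K_{1,n}$ with $n\ge 2$ in the star case, since $K_{1,1}=K_2$ has $st_{id}=2$; the paper's statement is silent on this edge case.
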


The friendship (or Dutch-Windmill) graph $F_n$ is a graph that can be constructed by the coalescence of $n$
copies of the cycle graph $C_3$ of length $3$ with a common vertex. The Friendship Theorem of Paul Erd\"{o}s,
Alfred R\'{e}nyi and Vera T. S\'{o}s \cite{erdos}, states that graphs with the property that every two vertices have
exactly one neighbour in common are exactly the friendship graphs. 
Let $n$ and $q\geq 3$ be any positive integer and  $F_{q,n}$ be the {\em generalized friendship graph}  formed by a collection of $n$ cycles (all of order $q$), meeting at a common vertex.  (see Figure \ref{figtetragons}). 

\begin{figure}[ht]
	\hspace{2.0cm}
	\includegraphics[width=10.5cm,height=2.5cm]{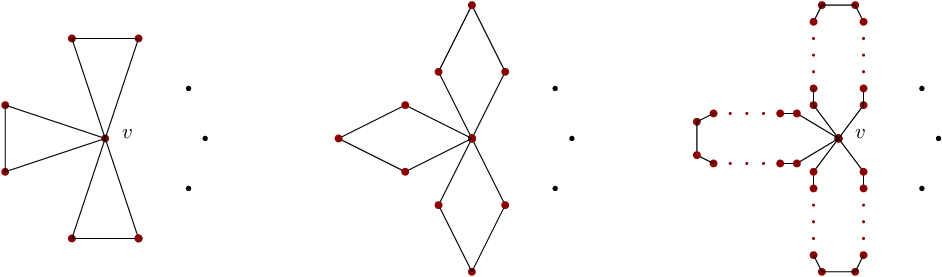}
	\caption{\label{figtetragons} The flowers  $F_{n},~ F_{4,n}$ and $F_{q,n}$, respectively. }
\end{figure}

The $n$-book graph $(n\geq2)$ (Figure \ref{book}) is defined as the Cartesian product $K_{1,n}\square P_2$. We call every $C_4$ in the book graph $B_n$, a page of $B_n$. All pages in $B_n$ have a common side $v_1v_2$.   We shall compute the distinguishing stability number  of $B_n$. 
The following observation gives the independent domination number of  the friendship graph and the book graph. 

\begin{figure}
	\begin{center}
		\hspace{.7cm}
		\includegraphics[width=0.5\textwidth]{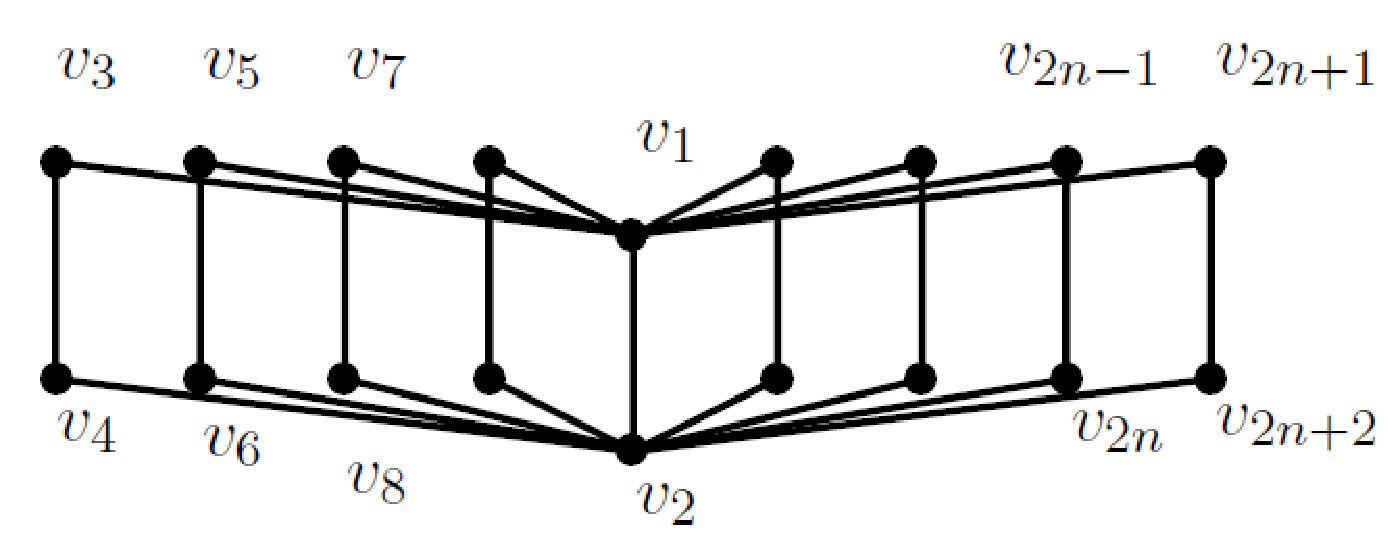}
		\caption{\label{book}  Book graph $B_n$.}
	\end{center}
\end{figure}  

\begin{observation}
	\begin{enumerate} 
\item[(i)] 
If $F_n$ is a friendship graph, then $\gamma_i(F_n)=1$.

\item[(ii)] $\gamma_i(F_{4,n})=\gamma_i(F_{5,n})=\gamma_i(F_{6,n})=n+1$

	\item[(iii)] 
	If $B_n$ is a book graph, then $\gamma_i(B_n)=n$.
			\end{enumerate} 
			\end{observation}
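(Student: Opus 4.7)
Proof plan. For each of the three assertions I will match an explicit upper-bound construction with a short case analysis for the lower bound.

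For (i), let $v$ be the common vertex of the $n$ triangles making up $F_n$. Since $v$ is adjacent to every other vertex of $F_n$, the singleton $\{v\}$ is independent and dominates $V(F_n)\setminus\{v\}$, so $\gamma_i(F_n)\le 1$, and equality is forced because $F_n$ is nonempty.

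For (ii), let $v$ be the common vertex of $F_{q,n}$ and label the $i$-th $q$-cycle as $v=x_i^{(0)},x_i^{(1)},\ldots,x_i^{(q-1)}$. For the upper bound I take
\[
S_{0}\;=\;\{v\}\cup\bigl\{x_i^{(\lfloor q/2\rfloor)} : 1\le i\le n\bigr\},
\]
and check independence (the ``middle'' vertex is not adjacent to $v$ for $q\in\{4,5,6\}$, and distinct cycles meet only at $v$) and domination (every non-$v$ vertex of cycle $i$ is either adjacent to $v$ or to the chosen middle vertex). This gives $\gamma_i(F_{q,n})\le n+1$. For the lower bound I let $S$ be any independent dominating set and split on whether $v\in S$. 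If $v\in S$, then $x_i^{(1)}$ and $x_i^{(q-1)}$ are excluded from $S$ for every $i$, and a direct inspection shows that the remaining internal vertices of each cycle cannot all be dominated unless $S$ contains at least one of $x_i^{(2)},\ldots,x_i^{(q-2)}$, forcing $|S|\ge 1+n$. If $v\notin S$, then $v$ is dominated via some $x_{i_0}^{(1)}$ or $x_{i_0}^{(q-1)}$; a short argument then forces a second vertex of cycle $i_0$ into $S$ (the ``far'' end of the cycle cannot be dominated by $v$), while each of the remaining $n-1$ cycles restricts $S$ to an independent dominating set of the non-$v$ path $P_{q-1}$, so contributes at least $\gamma_i(P_{q-1})$ vertices, where $\gamma_i(P_3)=1$ and $\gamma_i(P_4)=\gamma_i(P_5)=2$. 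Summing yields $|S|\ge 2+(n-1)\gamma_i(P_{q-1})\ge n+1$ for each of $q\in\{4,5,6\}$.

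For (iii), write $v_1=(u,1), v_2=(u,2)$ for the two central vertices of $B_n=K_{1,n}\square P_2$ and $\ell_i^{(k)}=(l_i,k)$ for the other vertices. For the upper bound, take $S=\{\ell_1^{(1)}\}\cup\{\ell_i^{(2)} : 2\le i\le n\}$: it has size $n$, it is independent (distinct leaves of $K_{1,n}$ remain non-adjacent in the product, and from each pair $\{\ell_i^{(1)},\ell_i^{(2)}\}$ we pick at most one vertex), and it dominates everything ($v_1$ via $\ell_1^{(1)}$, $v_2$ via $\ell_2^{(2)}$, and every $\ell_i^{(k)}$ either belongs to $S$ or is adjacent to its pair-mate in $S$). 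For the lower bound, if $v_1\in S$ then $v_2$ and every $\ell_i^{(1)}$ is excluded, so the only remaining neighbour of each $\ell_i^{(2)}$ is itself excluded, forcing $\ell_i^{(2)}\in S$ and $|S|\ge n+1$; the case $v_2\in S$ is symmetric. If $v_1,v_2\notin S$, then each pair $\{\ell_i^{(1)},\ell_i^{(2)}\}$ has only $v_1,v_2$ as external neighbours, so $S$ must meet every pair, giving $|S|\ge n$.

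The mildly delicate step is the $v\notin S$ branch of (ii): one has to separate the distinguished cycle $i_0$ that is responsible for dominating $v$ from the other cycles, and then invoke the small-case values $\gamma_i(P_{q-1})$ for $q\in\{4,5,6\}$. Everything else reduces to routine adjacency checks.
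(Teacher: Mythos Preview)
The paper records this statement as an observation and offers no proof at all, so there is nothing to compare against at the level of argument. Your proposal is correct and supplies exactly the kind of routine verification the paper omits: explicit independent dominating sets for the upper bounds, and case splits on whether the central vertex lies in $S$ for the lower bounds. The only place worth a remark is the $v\notin S$ branch of (ii) for $q=4$: there your separate treatment of the distinguished cycle $i_0$ (forcing two vertices) is genuinely needed, since $\gamma_i(P_3)=1$ alone would only give $|S|\ge n$; for $q\in\{5,6\}$ the per-cycle bound $\gamma_i(P_{q-1})=2$ already suffices without singling out $i_0$. Everything else is straightforward, and your argument stands.
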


\begin{proposition}
		\begin{enumerate} 
			\item[(i)] 
	If $F_n$ is a friendship graph, then $st_{id}(F_n)=1$.
	
	\item[(ii)] For every $k\geq 3$, $st_{id}(F_{k,n})=1$.
	
	\item[(iii)] 
$st_{id}(B_n)=2$. 
	
	\end{enumerate} 
\end{proposition}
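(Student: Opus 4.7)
For (i), I would delete the central vertex $v$ of $F_n$. Since $\{v\}$ is already a $\gamma_i$-set we have $\gamma_i(F_n)=1$, while $F_n-v\cong nK_2$ gives $\gamma_i(F_n-v)=n\ge 2$. Hence $st_{id}(F_n)\le 1$, and the trivial lower bound $st_{id}(F_n)\ge 1$ completes the proof.

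For (ii), the same move works: remove the central vertex $v$ of $F_{k,n}$. Since $F_{k,n}-v$ splits into $n$ disjoint copies of $P_{k-1}$, one gets $\gamma_i(F_{k,n}-v)=n\lceil(k-1)/3\rceil$. To show this differs from $\gamma_i(F_{k,n})$, I would do a case analysis on whether a $\gamma_i$-set $S$ of $F_{k,n}$ contains $v$. If $v\in S$, the two neighbours of $v$ in each cycle are excluded from $S$, and the remaining interior path $P_{k-3}$ must be independently dominated cycle by cycle, yielding $|S|=1+n\lceil(k-3)/3\rceil$. If $v\notin S$, an ``active'' cycle containing a neighbour of $v$ in $S$ contributes $1+\gamma_i(P_{k-3})$ vertices, while each of the remaining $n-1$ cycles must independently dominate its own interior path $P_{k-1}$ (since $v$ no longer helps), contributing $\gamma_i(P_{k-1})$ each; comparing these two totals shows the second case is never smaller than the first. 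Thus $\gamma_i(F_{k,n})=1+n\lceil(k-3)/3\rceil$, and checking each residue of $k\bmod 3$ confirms this differs from $n\lceil(k-1)/3\rceil$ for every $n\ge 2$, so $st_{id}(F_{k,n})=1$.

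For (iii), label the vertices of $B_n$ as $v_1,v_2$ (the common side) together with $u_i,w_i$ for $1\le i\le n$, where $v_1u_i,v_2w_i,u_iw_i\in E(B_n)$. For $st_{id}(B_n)\ge 2$, by symmetry I would verify $\gamma_i(B_n-x)=n$ both when $x$ is a spine vertex (say $v_1$) and when $x$ is a leaf (say $u_1$). Removing $v_1$ produces a caterpillar with IDS $\{w_1,\dots,w_n\}$ of size $n$, and the matching lower bound is forced because the $n$ disjoint pairs $\{u_i,w_i\}$ each require at least one member in $S$. Removing $u_1$ admits the IDS $\{w_1\}\cup\{u_i:i\ge 2\}$ of size $n$, and the lower bound $n$ follows by splitting on whether $v_1$ or $v_2$ lies in $S$. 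For $st_{id}(B_n)\le 2$, I remove the same-side pair $\{u_1,u_2\}$; then $\{v_2\}\cup\{u_i:3\le i\le n\}$ is an IDS of size $n-1$ (and for $n=2$ the residual graph is $K_{1,3}$ with $\gamma_i=1$), so the value strictly drops.

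The main obstacle is the case analysis in (ii). The subtle point is that requiring an endpoint of $P_{k-1}$ to lie in a minimum IDS may cost one extra vertex beyond $\gamma_i(P_{k-1})$, which makes the active-cycle accounting delicate -- especially in the residue class $k\equiv 1\pmod 3$, where the two cases coincide exactly in size and the difference $\gamma_i(F_{k,n})-\gamma_i(F_{k,n}-v)$ is only $1$.
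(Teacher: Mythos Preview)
Your argument is correct and follows the same overall strategy as the paper: delete the central vertex for (i) and (ii), and delete two non-central page vertices for (iii). Two minor differences are worth noting. For (ii) you actually compute $\gamma_i(F_{k,n})=1+n\lceil(k-3)/3\rceil$ via the $v\in S$ / $v\notin S$ split and verify it differs from $n\lceil(k-1)/3\rceil$ for each residue of $k$; the paper instead just asserts $\gamma_i(F_{k,n}-v)=\gamma_i(P_{k-1})\ne n+1$, which as written drops the factor $n$ and tacitly relies on $\gamma_i(F_{k,n})=n+1$ (only recorded for $k\in\{4,5,6\}$), so your treatment is the more careful one. For (iii) the paper removes the two non-central vertices of a single page (reducing to $B_{n-1}$) and omits the lower-bound check that no single deletion changes $\gamma_i$; your choice of $\{u_1,u_2\}$ works equally well for the upper bound, and your case analysis on $x\in\{v_1,u_1\}$ fills in the missing lower bound.
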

\begin{proof}
	\begin{enumerate} 
		\item[(i)] By removing the center vertex of $F_n$, we have $nK_2$ and obviously $\gamma_i(F_n)\neq\gamma_i(nK_2)=n$. So we have the result. 
		
		\item[(ii)] 
		If $v$ is the center vertex of $F_{k,n}$, then 
		$\gamma_i(F_{k,n}-v)=\gamma_i(P_{k-1})=
		\lfloor\frac{k+1}{3}\rfloor\neq n+1$, so we have the result. 
		
		\item[(iii)] By removing two non-central vertices $u,v$ in the page of $B_n$ we have $\gamma_i(B_{n}-\{u,v\})=n-1$. \qed
	\end{enumerate}
	\end{proof} 
		
\begin{theorem}
There exist graphs $G$ and $H$ with the same independent domination number such that $|st_{id}(G)-st_{id}(H)|$ is very large.
\end{theorem}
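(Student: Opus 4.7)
The plan is to exhibit, for each integer $n\ge 2$, a pair of graphs $G_n$ and $H_n$ with the same independent domination number but whose $id$-stabilities differ by a quantity growing without bound in $n$. The conceptual strategy is to hold $\gamma_i$ small and fixed while drastically varying the rigidity of the independent dominating sets: if many different single vertices already form independent dominating sets, vertices can be deleted freely without changing $\gamma_i$, whereas if only one special vertex can do the job, removing that vertex forces $\gamma_i$ to jump immediately.

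The cleanest candidates realising this contrast are the star $G_n=K_{1,n}$ and the complete graph $H_n=K_n$, both of which satisfy $\gamma_i=1$. For the star, deleting the unique center yields the empty graph on $n$ vertices, whose only independent dominating set is the whole vertex set; thus $\gamma_i$ jumps from $1$ to $n$, and so $st_{id}(G_n)=1$, in agreement with the earlier observation on stars and double stars. For the complete graph, every induced subgraph obtained by deleting fewer than $n$ vertices is again a nonempty complete graph with $\gamma_i=1$, so no such deletion changes $\gamma_i$; only removing all $n$ vertices does, and by the convention adopted in Section~2 this qualifies as a change, so $st_{id}(H_n)=n$. This is also consistent with the remark already made in Section~2 that $st_{id}(G)=|V(G)|$ characterises complete graphs.

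Combining these two evaluations gives $|st_{id}(G_n)-st_{id}(H_n)|=n-1$, which can be made arbitrarily large. The argument has essentially no obstacle: the only delicate point is to respect the convention about the null graph $K_0$, so that deleting all vertices of $K_n$ genuinely counts as altering $\gamma_i$, and this convention is already built into the paper's definition of $st_{id}$. If one preferred to avoid invoking $K_0$ at all, one could replace $K_n$ by $K_n$ with a single pendant vertex attached; the same counting still gives an unbounded gap in $id$-stability while both graphs retain $\gamma_i=1$.
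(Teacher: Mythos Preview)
Your proposal is correct and follows essentially the same approach as the paper: both use the pair $(K_{1,n},K_n)$ (the paper writes $K_{1,n-1}$ and $K_n$, a cosmetic difference), observe that each has $\gamma_i=1$, and compute $st_{id}=1$ for the star versus $st_{id}=n$ for the complete graph to obtain a gap of $n-1$. Your added commentary about the role of the $K_0$ convention and the alternative pendant-vertex construction is not in the paper but is a helpful elaboration.
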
 
\begin{proof}
	Let $G=K_n$  with $\gamma_i(G)=1$ and $st_{id}(G)=n$. Let $H$ denote the graph $K_{1,n-1}$ with $\gamma_i(H)=1$ and $st_{id}(H)=1$. We have $|st_{id}(G)-st_{id}(H)|=n-1$ and so the result follows. \qed
	\end{proof}

Before investigating the independent domination stability of paths ans cycles, we state  the following observation.

\begin{observation}{\rm \cite{7}}\label{pathcy}
	 For the path and cycle, $\gamma_i(P_n)=\gamma_i(C_n)=\lfloor\frac{n+2}{3}\rfloor$.
 \end{observation}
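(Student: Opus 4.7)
The plan is to establish the formula by matching lower and upper bounds, handling the path and the cycle in parallel. For the lower bound, note that both $P_n$ and $C_n$ have maximum degree two, so the closed neighborhood of any vertex contains at most three vertices; hence every dominating set $D$ of $P_n$ or $C_n$ (independent or not) satisfies $3|D| \ge n$. Since $|D|$ is an integer, this gives $|D| \ge \lceil n/3 \rceil$, and a case check on $n \bmod 3$ shows $\lceil n/3 \rceil = \lfloor (n+2)/3 \rfloor$. Because $\gamma_i(G) \ge \gamma(G)$, we conclude $\gamma_i(P_n), \gamma_i(C_n) \ge \lfloor (n+2)/3 \rfloor$.

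For the upper bound I would exhibit an explicit independent dominating set of the desired size in each residue class. Writing $n = 3k + r$ with $r \in \{0, 1, 2\}$, for the path $P_n = v_1 v_2 \cdots v_n$ one takes the set $\{v_2, v_5, \ldots, v_{3k-1}\}$ when $r = 0$; when $r = 1$ one appends $v_{3k+1}$ (which is at distance two from $v_{3k-1}$, so independence is preserved); when $r = 2$ one appends $v_{3k+2}$. A direct check confirms that consecutive chosen vertices are at distance at least two, hence the set is independent, and the union of their closed neighborhoods exhausts $V(P_n)$. For the cycle $C_n = v_0 v_1 \cdots v_{n-1}$ the construction is analogous: take $\{v_0, v_3, \ldots, v_{3k-3}\}$ when $r = 0$, append $v_{3k-1}$ when $r = 1$, and append $v_{3k}$ when $r = 2$, verifying in each case that the new vertex is at distance at least two both from the previous selected vertex and from $v_0$ via the wrap-around, and that it dominates the vertices not yet covered.

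The only real subtlety is the cycle case, where the wrap-around can make the chosen vertices nearest to $v_0$ and $v_{n-1}$ accidentally adjacent and thereby break independence; the final selected vertex must be placed with that in mind in each residue class. Apart from this, the argument is essentially a bookkeeping exercise: the lower bound via degree counting and the upper bound via explicit construction are both standard, and the elementary identity $\lceil n/3 \rceil = \lfloor (n+2)/3 \rfloor$ packages them into the stated formula.
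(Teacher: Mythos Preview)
The paper does not supply a proof of this observation; it is quoted from the survey \cite{7} as a known fact. Your argument is the standard one and is correct: the degree bound $3|D|\ge n$ gives the lower bound $\lceil n/3\rceil=\lfloor(n+2)/3\rfloor$, and the explicit constructions in each residue class give the matching upper bound. The one place worth tightening is the cycle construction in the cases $r=1$ and $r=2$: you correctly flag the wrap-around issue, and indeed your choices $v_{3k-1}$ (for $r=1$) and $v_{3k}$ (for $r=2$) sit at distance exactly two from $v_0$, so independence holds; it would be cleanest to state those distances explicitly rather than leaving them to ``a direct check''.
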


The following result establishes an upper bound on the independent domination stability in terms of $\delta(G)$.

\begin{observation}\label{delta} 
	 For any graph $G$,  $st_{id}(G) \leq  \delta(G)+1$.
\end{observation}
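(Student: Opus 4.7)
The plan is to choose a vertex $v \in V(G)$ with $\deg(v) = \delta(G)$ and aim to show that deleting its closed neighborhood $N[v]$, a set of exactly $\delta(G)+1$ vertices, already alters $\gamma_i$. A preliminary fact I will lean on throughout is that if $u$ is an isolated vertex of any graph $H$, then $\gamma_i(H-u) = \gamma_i(H)-1$: indeed $u$ is forced into every dominating set of $H$ (nothing else can dominate it) while contributing no extra coverage, so every $\gamma_i$-set of $H$ consists of $\{u\}$ together with a $\gamma_i$-set of $H-u$.

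Armed with this, I would split into two cases. In Case 1, if $\gamma_i(G - N(v)) \neq \gamma_i(G)$, then removing the $\delta(G)$ vertices of $N(v)$ already changes the independent domination number, so $st_{id}(G) \le \delta(G) \le \delta(G)+1$. In Case 2, if $\gamma_i(G - N(v)) = \gamma_i(G)$, then note that in $G - N(v)$ the vertex $v$ has been stripped of all neighbours and is therefore isolated; the preliminary fact then gives
\[
\gamma_i(G - N[v]) \;=\; \gamma_i\bigl((G - N(v)) - v\bigr) \;=\; \gamma_i(G - N(v)) - 1 \;=\; \gamma_i(G) - 1,
\]
which differs from $\gamma_i(G)$. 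Hence deleting the $\delta(G)+1$ vertices of $N[v]$ changes $\gamma_i$ and $st_{id}(G) \le \delta(G)+1$.

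I do not expect any genuine technical obstacle here; the argument is essentially structural and reduces to one observation about isolated vertices. The main points to keep in mind are the boundary cases. When $\delta(G) = 0$, the vertex $v$ is already isolated in $G$, so Case 2 applies immediately and gives $st_{id}(G) \le 1$. When $N[v] = V(G)$, for example if $G = K_n$, the graph $G - N[v]$ is the null graph $K_0$ with $\gamma_i(K_0) = 0$; this agrees with the authors' convention mentioned earlier in the text, still yields $\gamma_i(G - N[v]) \neq \gamma_i(G)$, and in fact shows that the bound $st_{id}(G) \le \delta(G)+1$ is sharp for complete graphs.
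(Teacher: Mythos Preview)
The paper states this as an observation without proof, so there is no argument of the authors to compare against. Your proof is correct and is the natural way to justify the bound: strip away the neighbours of a minimum-degree vertex $v$ to isolate it, then use that deleting an isolated vertex drops $\gamma_i$ by exactly one. The case split is genuinely needed, since one cannot assert $\gamma_i(G-N[v]) \neq \gamma_i(G)$ outright---it is possible that removing $N(v)$ raises $\gamma_i$ by one and the subsequent removal of $v$ brings it back---and you handle this cleanly by observing that in that scenario $N(v)$ itself already witnesses the change. Your treatment of the boundary cases ($\delta(G)=0$ and $N[v]=V(G)$) is also correct and matches the paper's convention on $K_0$.
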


We first determine the $i$-stability of paths.

\begin{proposition}
    If $P_{n}$ is a path of order $n$, then
 $st_{id}(P_n)=\left\{
 \begin{array}{cc}
 
 2   &\quad n\equiv 2 ~(\mbox{mod } 3)\\
 1    &\quad otherwise\\
 \end{array}\right.
. $

\end{proposition}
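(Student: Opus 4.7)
The plan is to use the closed formula $\gamma_i(P_m)=\lfloor(m+2)/3\rfloor$ from Observation~\ref{pathcy} together with the fact that $\gamma_i$ is additive on disjoint unions: if we remove an interior vertex $v_i$ from $P_n$, the result is $P_{i-1}\cup P_{n-i}$ (with the convention $P_0=\emptyset$), and $\gamma_i(P_{i-1}\cup P_{n-i})=\gamma_i(P_{i-1})+\gamma_i(P_{n-i})$. I label $V(P_n)=\{v_1,\ldots,v_n\}$ and split into three residue classes modulo $3$.

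For $n\equiv 1\pmod 3$, write $n=3k+1$ so $\gamma_i(P_n)=k+1$. Deleting the endpoint $v_1$ yields $P_{n-1}=P_{3k}$ with $\gamma_i=k\neq k+1$, so one deletion already changes the parameter and $st_{id}(P_n)=1$. For $n\equiv 0\pmod 3$, write $n=3k$ so $\gamma_i(P_n)=k$. Removing $v_2$ produces $K_1\cup P_{n-2}=K_1\cup P_{3k-2}$, whose independent domination number is $1+\lfloor(3k)/3\rfloor=k+1\neq k$; again $st_{id}(P_n)=1$.

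The interesting case is $n\equiv 2\pmod 3$, where I would first establish the lower bound $st_{id}(P_n)\ge 2$ by showing that no single-vertex deletion alters $\gamma_i$. Writing $n=3k+2$ (so $\gamma_i(P_n)=k+1$) and setting $a=i-1$, $b=n-i$, we have $a+b=3k+1$ for each choice of $v_i$. A short case analysis on $a\bmod 3$ using the closed formula gives:
\begin{itemize}
\item if $a=3j$, then $b=3(k-j)+1$ and $\gamma_i(P_a)+\gamma_i(P_b)=j+(k-j+1)=k+1$;
\item if $a=3j+1$, then $b=3(k-j)$ and the sum is $(j+1)+(k-j)=k+1$;
\item if $a=3j+2$, then $b=3(k-j-1)+2$ and the sum is $(j+1)+(k-j)=k+1$.
\end{itemize}
Each sub-case (including the boundary values $a=0$ or $b=0$ corresponding to endpoint deletion) yields exactly $k+1$, proving that $\gamma_i$ is unchanged by any single vertex removal. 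For the matching upper bound, removing $v_1$ and $v_2$ leaves $P_{n-2}=P_{3k}$, which has $\gamma_i=k\neq k+1$, so $st_{id}(P_n)\le 2$ and equality holds.

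The main obstacle is the modular bookkeeping in the third case: I need the three residues of $a$ to line up in such a way that in each one the contribution of the two shorter paths always aggregates to $\gamma_i(P_n)$. Once the tabulation above is recorded cleanly, the rest is immediate from Observation~\ref{pathcy}, and the constructive upper bounds in the other two residue classes are one-line computations.
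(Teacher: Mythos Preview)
Your proposal is correct and follows essentially the same approach as the paper: both proofs split into residue classes modulo $3$, use the formula $\gamma_i(P_m)=\lfloor(m+2)/3\rfloor$ from Observation~\ref{pathcy}, exhibit a single changing deletion when $n\equiv 0,1$, and for $n\equiv 2$ verify that every one-vertex removal leaves $\gamma_i$ unchanged. The only minor difference is that for the upper bound in the $n\equiv 2$ case the paper invokes the general bound $st_{id}(G)\le \delta(G)+1$ of Observation~\ref{delta}, whereas you give an explicit two-vertex deletion; both are equally valid and the overall structure is the same.
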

\begin{proof} We consider  the following cases:  

\noindent{\bf Case 1.} Suppose first that $n\equiv 0 ~(\mbox{mod } 3)$. We observe that $\gamma_i(P_{n}-v)=\gamma_i(P_n)+1$, where $v$ is a support vertex. So, $st_{id}(P_n)=1$. 

\noindent{\bf Case 2.} Assume that $n\equiv 1 ~(\mbox{mod } 3)$. If $v$ is a leaf, then $\gamma_i(P_{n}-v)=\gamma_i(P_n)-1$, and it follows that $st_{id}(P_n)=1$.  

\noindent{\bf Case 3.} Suppose  that $n=3k + 2$ for some integer $k$. By Observation \ref{pathcy}, we have $\gamma_(P_n)=k + 1$. We 
show that the removal of $v$ does not change the independent domination number. For this purpose, we consider the following subcases.

\noindent{\bf Subcase 3.1.} If $v$ is a leaf, then, $\gamma_i(P_{n}-v)=\gamma_i(P_{n-1})=k+1=\gamma_i(P_n)$.

\noindent{\bf Subcase 3.2.} Let $deg(v)=2$ and  $P_n-v$
 consists of two connected components $P_{n_1}$ and $P_{n_2}$ such that $n_1 + n_2=n-1$. Without loss of generality, we first 
assume that $n_1\equiv 0 ~(\mbox{mod } 3)$ and $n_2\equiv 1 ~(\mbox{mod } 3)$, and so 
$$\gamma_i(P_{n}-v)=\gamma_i(P_{n_1})+\gamma_i(P_{n_2})= \lfloor\frac{n_1+2}{3}\rfloor+\lfloor\frac{n_2+2}{3}\rfloor=\frac{n_1}{3}+\frac{n_2+2}{3}=k+1=\gamma_i(P_n).$$ Analogously, we can obtain that $\gamma_i(P_{n}-v)=\gamma_i(P_n)$ when $n_1\equiv 2 ~(\mbox{mod } 3)$ and $n_2\equiv 2 ~(\mbox{mod } 3)$.

Now, by Subcases 3.1 and 3.2, we conclude that $st_i(P_n) \ge 2$. Moreover, recall that by Proposition \ref{delta}, we have  $st_{id}(P_n) \leq 2$. Consequently, $st_{id}(P_n)=2$ and the result follows. \qed

\end{proof}

Next, we determine the $i$-stability of cycles.

\begin{proposition}

If $C_{n}$ is  a cycle of order $n$, then
  $st_{id}(C_n)=\left\{
 \begin{array}{cc}
 3  &\quad n\equiv 0 ~(\mbox{mod } 3)\\
 2   &\quad n\equiv 2 ~(\mbox{mod } 3)\\
 1    &\quad n\equiv 1 ~(\mbox{mod } 3)\\
 \end{array}\right.
 .$
 
\end{proposition}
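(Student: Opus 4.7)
The plan is to split into three cases according to $n \bmod 3$, invoking Observation~\ref{pathcy} to compute $\gamma_i$ of every path that can arise, and Observation~\ref{delta} to get the free upper bound $st_{id}(C_n) \le \delta(C_n) + 1 = 3$. Throughout I will use the elementary fact that removing a set $S$ of vertices from $C_n$ produces a disjoint union of paths whose total order is $n - |S|$, and that $\gamma_i$ is additive over connected components.

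For $n \equiv 1 \pmod 3$, writing $n = 3k+1$, Observation~\ref{pathcy} gives $\gamma_i(C_n) = k+1$, whereas deleting one vertex leaves $P_{3k}$ with $\gamma_i = k$, so $st_{id}(C_n) = 1$. For $n \equiv 2 \pmod 3$, writing $n = 3k+2$, we have $\gamma_i(C_n) = k+1$; deleting a single vertex yields $P_{3k+1}$ with $\gamma_i = k+1$, so one vertex is never enough, but deleting two adjacent vertices yields $P_{3k}$ with $\gamma_i = k$, forcing $st_{id}(C_n) = 2$.

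The substantive case is $n \equiv 0 \pmod 3$, say $n = 3k$, where I must show $st_{id}(C_n) = 3$. The upper bound is immediate: deleting three consecutive vertices gives $P_{3(k-1)}$ with $\gamma_i = k-1 \neq k = \gamma_i(C_n)$. For the lower bound $st_{id}(C_n) \ge 3$, I will rule out every removal of one or two vertices. One vertex leaves $P_{3k-1}$ with $\gamma_i = k$; two adjacent vertices leave $P_{3k-2}$ with $\gamma_i = k$; two non-adjacent vertices produce $P_{n_1} \cup P_{n_2}$ with $n_1 + n_2 = 3k - 2$, i.e.\ $n_1 + n_2 \equiv 1 \pmod 3$. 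Enumerating the three admissible residue pairs $(n_1 \bmod 3, n_2 \bmod 3) \in \{(0,1),(1,0),(2,2)\}$ and applying Observation~\ref{pathcy} to each term gives
$$\left\lfloor\frac{n_1+2}{3}\right\rfloor + \left\lfloor\frac{n_2+2}{3}\right\rfloor \;=\; \frac{n_1+n_2+2}{3} \;=\; k,$$
so $\gamma_i$ is unchanged in every scenario.

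The main obstacle is precisely this residue enumeration for $n \equiv 0 \pmod 3$; although each subcase is a short floor computation, the key insight is that the three admissible residue pairs all collapse to the same arithmetic identity, so no two-vertex deletion can disturb $\gamma_i$. A small boundary issue is $n = 3$, in which $C_3 = K_3$ and deleting all three vertices produces the null graph $K_0$; this is compatible with the paper's convention $st_{id}(K_n) = n$, so the formula still returns the value $3$.
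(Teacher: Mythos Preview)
Your proof is correct. Cases $n\equiv 1$ and $n\equiv 2\pmod 3$ match the paper's argument verbatim. For $n\equiv 0\pmod 3$ the paper takes a slightly different route: after observing that removing any single vertex leaves $P_{n-1}$ with unchanged $\gamma_i$, it simply invokes the preceding proposition on paths to conclude $st_{id}(P_{n-1})=2$ (since $n-1\equiv 2\pmod 3$), and hence $st_{id}(C_n)=1+2=3$. This recursive reduction is shorter because it reuses prior work, whereas your direct residue enumeration of all two-vertex deletions is self-contained and does not depend on the path proposition. Both establish the same lower bound; your approach has the minor advantage of making the arithmetic completely explicit, while the paper's approach illustrates how the cycle case folds back onto the path case.
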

\begin{proof} We consider the following cases:  

\noindent{\bf Case 1.} We  assume that $n=3k+1$ for some integer $k$. Then for any vertex $v$ we have $\gamma_i(C_{n}-v)=\gamma_i(P_{n-1})=k=\gamma_i(C_n)-1$, and so $st_{id}(C_n)=1$. 

\noindent{\bf Case 2.} Suppose that $n=3k+2$ for some integer $k$. For any vertex $v$ we have $\gamma_i(C_{n}-v)=\gamma_i(P_{n-1})=k+1=\gamma_i(C_n)$, and it holds that $st_i(C_n)\ge 2$. Now we consider that $u$ and $v $ are two adjacent vertices. Then, $\gamma_i(C_{n}-u-v)=\gamma_i(P_{n-2})=k=\gamma_i(C_n)-1$, which implies $st_{id}(C_n)=2.$

\noindent{\bf Case 3.} Assume that $n=3k$ for some integer $k$. It is easy to see that the removal of any vertex does not change the independent domination
number. Since $C_n-v=P_{n-1}$, by Proposition \ref{pathcy} we have $st_i(P_{n-1})=2$, and so  $st_i(C_n)=3$.\qed
\end{proof}

\section{Bounds on the independent domination stability }
In this section, we establish some bounds for the independent domination stability of a graph.

 Here, we study the relationship between the $id$-stabilities of graphs $G$ and $G-v$, where $v\in V(G)$. Also we obtain upper bounds for $st_{id}(G)$. 
 \begin{proposition}\label{minus}
 	Let $G$ be a graph and $v$ be a vertex of $G$, then $$st_{id}(G)\leq   st_{id}(G-v)+1.$$
 \end{proposition}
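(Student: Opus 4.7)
The plan is to let $k = st_{id}(G-v)$ and pick a witness set $S \subseteq V(G-v)$ with $|S| = k$ such that $\gamma_i((G-v)-S) \neq \gamma_i(G-v)$. The natural candidate witness for $G$ is the set $S \cup \{v\}$, which has size $k+1$ and satisfies $G - (S \cup \{v\}) = (G-v) - S$. So we automatically know $\gamma_i(G - (S \cup \{v\})) = \gamma_i((G-v)-S) \neq \gamma_i(G-v)$. The issue is that, a priori, this only gives us a comparison with $\gamma_i(G-v)$, not with $\gamma_i(G)$.

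To fix this, I would split into two cases based on the value of $\gamma_i(G-v)$ relative to $\gamma_i(G)$. If $\gamma_i(G-v) \neq \gamma_i(G)$, then $\{v\}$ itself is already a vertex set of size $1$ whose removal changes the independent domination number, so $st_{id}(G) \leq 1 \leq st_{id}(G-v)+1$ and we are done immediately. Otherwise, $\gamma_i(G-v) = \gamma_i(G)$, and then the chain of equalities/inequality
\[
\gamma_i(G - (S \cup \{v\})) = \gamma_i((G-v) - S) \neq \gamma_i(G-v) = \gamma_i(G)
\]
shows that $S \cup \{v\}$ is a witness of size $k+1$ for $G$, yielding $st_{id}(G) \leq k+1 = st_{id}(G-v)+1$.

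There is no real obstacle here; the only mild subtlety is to make sure the argument is valid even when $(G-v)-S$ becomes the null graph $K_0$, but the paper explicitly admits $K_0$ as a graph (so that $st_{id}$ is always defined), and the comparison of independent domination numbers used above does not require either side to be nonzero. Concluding, both cases give $st_{id}(G) \leq st_{id}(G-v) + 1$, which is the desired bound.
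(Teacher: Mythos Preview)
Your proof is correct and follows exactly the same two-case split as the paper's own argument: either $\gamma_i(G-v)\neq\gamma_i(G)$ (giving $st_{id}(G)=1$), or $\gamma_i(G-v)=\gamma_i(G)$ (so that adjoining $v$ to a minimum witness set for $G-v$ yields a witness set for $G$). Your version simply spells out the witness-set mechanics in the second case, which the paper leaves implicit.
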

 \begin{proof} 
 	  If $\gamma_i(G)=\gamma_i(G-v)$, then $st_{id}(G)\leq   st_{id}(G-v)+1$. If $\gamma_i(G)\neq \gamma_i(G-v)$, then $st_{id}(G)= 1$, and so we have the result.\qed
 	  \end{proof} 
 
 Using Proposition \ref{minus} and mathematical induction, we have 
 \[
 st_{id}(G)\leq   st_{id}(G-v_1-\cdots - v_s)+s,
 \]
  where $1\leq   s \leq   n-2$ and $n=\vert V(G)\vert$. Only using this formula we can get different upper bounds for $st_{id}(G)$ of graph $G$. We state some these upper bounds in the following theorem which for the proof  of each case, it is sufficient to remove the vertices of $G$ until the induced subgraph which  stated in the hypothesis, appears. Next using $st_{id}(G)\leq   st_{id}(G-v_1-\cdots - v_s)+s$ and the value of $st_{id}(G-v_1-\cdots - v_s)$, we can have the result. 
 
 \begin{theorem}\label{multipartitestabil}
 	Let $G\neq K_n$ be a simple graph of order $n\geq2$.
 	\begin{itemize}
 		\item[(i)]  $st_{id}(G)\leq   n-1$.
 		\item[(ii)] If graph $G$ has  the star graph $K_{1,t}$ as the induced subgraph with $t \geq3$, then  $st_{id}(G)\leq   n-t$.
 	\item[(iii)]   		For any graph $G$, $st_{id}(G) \leq n-\Delta(G)$. 
 	%	\item[(v)]   $st_D(G)\leq   n-\alpha(G)+1$, where $\alpha(G)$ is the independence number of $G$.
 	\end{itemize}
 \end{theorem}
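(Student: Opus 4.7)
The strategy throughout is to iterate Proposition~\ref{minus}, which by induction on the number of deleted vertices gives
\[
st_{id}(G) \;\leq\; st_{id}(G - v_1 - \cdots - v_s) + s.
\]
One then chooses the vertices $v_i$ so that the residue $G - v_1 - \cdots - v_s$ is a small graph whose $id$-stability is already known to be $1$.

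For part (i), the hypothesis $G \neq K_n$ supplies two non-adjacent vertices $u$ and $w$; I delete the remaining $n-2$ vertices, leaving the edgeless graph $\overline{K_2}$. Since $\gamma_i(\overline{K_2}) = 2$ whereas $\gamma_i(K_1) = 1$, we have $st_{id}(\overline{K_2}) = 1$, and the iterated inequality yields $st_{id}(G) \leq 1 + (n-2) = n-1$. For part (ii), the hypothesis produces an induced copy of $K_{1,t}$; deleting the $n-t-1$ vertices outside this star leaves $K_{1,t}$ itself, whose $id$-stability is $1$ by Observation~2.1(i), so $st_{id}(G) \leq 1 + (n - t - 1) = n - t$.

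For part (iii), I would pick a vertex $v$ of maximum degree $\Delta = \Delta(G)$ and delete the $n - \Delta - 1$ vertices in $V(G)\setminus N[v]$. The residue is the closed-neighbourhood subgraph $H = G[N[v]]$, in which $v$ is universal, so $\gamma_i(H)=1$. It then suffices to exhibit a single vertex of $H$ whose removal alters $\gamma_i(H)$, which would give $st_{id}(H) \leq 1$ and hence $st_{id}(G) \leq n - \Delta$ after the iterated inequality. The main obstacle is the degenerate case in which $H$ contains more than one universal vertex: there, removing any single vertex leaves another universal vertex in place and $\gamma_i(H)=1$ is preserved. Handling this requires either a more careful choice of $v$ (for instance, a maximum-degree vertex whose open neighbourhood $G[N(v)]$ itself has no universal vertex, so that deleting $v$ strictly increases $\gamma_i$) or an appeal to parts~(i) and~(ii) applied inside $H$, absorbing the extra deletion against the slack available in the target bound.
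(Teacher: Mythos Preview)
Your arguments for parts (i) and (ii) are correct and coincide with the paper's: iterate Proposition~\ref{minus} to obtain $st_{id}(G)\le st_{id}(G-v_1-\cdots-v_s)+s$, then strip vertices down to a graph ($\overline{K_2}$ for (i), the induced $K_{1,t}$ for (ii)) whose $id$-stability is already known to be $1$.

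For part (iii) you correctly identify the obstacle when $H=G[N[v]]$ has more than one universal vertex, and your proposed workarounds do not actually close the gap. In fact the gap cannot be closed, because statement~(iii) is false as written. Take $G=K_4-e$, say with missing edge $cd$: then $n=4$, $\Delta(G)=3$, and the claimed bound reads $st_{id}(G)\le 1$. However $\gamma_i(G)=1$ (witnessed by $\{a\}$ or $\{b\}$), and deleting any single vertex leaves either $K_3$ or $K_{1,2}$, both with independent domination number $1$; hence $st_{id}(G)\ge 2$. The same contradiction arises for $K_n-e$ whenever $n\ge 4$. So neither your approach nor the paper's sketch (which for (iii) tacitly assumes an induced $K_{1,\Delta}$) can succeed: the bound $st_{id}(G)\le n-\Delta(G)$ requires an additional hypothesis, for instance that some vertex of maximum degree has an independent open neighbourhood, in which case (iii) becomes the $t=\Delta$ instance of (ii).
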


\begin{theorem}\label{stab&dist}
	If  $G$ is a graph of order $n$, then $st_{id}(G)\leq  n+1-2\gamma_{i}(G)$.
\end{theorem}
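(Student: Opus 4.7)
Let $\gamma:=\gamma_i(G)$ and fix a $\gamma_i$-set $D=\{v_1,\dots,v_\gamma\}$ of $G$; write $R=V(G)\setminus D$, so $|R|=n-\gamma$. The identity $n+1-2\gamma=|R|-(\gamma-1)$ guides the construction: the plan is to delete a set $S$ of $n+1-2\gamma$ vertices from $R$ (which is possible since $n+1-2\gamma\leq n-\gamma$ whenever $\gamma\ge 1$), leaving a graph $G'=G-S$ on exactly $2\gamma-1$ vertices---all of $D$ together with a surviving subset $R'\subseteq R$ of size $\gamma-1$. Since $D$ is independent in $G'$ and still dominates every vertex of $R'$, we get $\gamma_i(G')\leq\gamma$ for free; the objective is to choose $S$ so that in fact $\gamma_i(G')\neq\gamma$, which immediately forces $st_{id}(G)\leq|S|=n+1-2\gamma$.

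I would begin by splitting on whether some $v_i\in D$ has empty external private neighborhood, i.e.\ $pn(v_i,D)\cap R=\emptyset$. In that easy case, $D\setminus\{v_i\}$ already dominates $V(G)\setminus\{v_i\}$, so $\gamma_i(G-v_i)\leq\gamma-1$, which gives $st_{id}(G)\leq 1$ and (whenever the bound is non-trivial, i.e.\ $\gamma\leq n/2$) comfortably beats $n+1-2\gamma$. Otherwise every $v_i$ has a private neighbor $u_i\in R$, so $|R|\ge\gamma$ and the deletion budget $n+1-2\gamma$ is strictly positive. In this second case the plan is to retain some $u_1$ together with $\gamma-2$ further vertices of $R$ chosen so that, after the deletion, two members of $D$ (one of them $v_1$) can be replaced in an independent dominating set by a single vertex adjacent to both, producing an independent dominating set of $G'$ of size $\gamma-1$. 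Concretely, one picks $u\in R'$ with two neighbors $v_i,v_j\in D$ and no further neighbors in $D$, and completes $R'$ from vertices having a neighbor in $D\setminus\{v_i,v_j\}$ so that domination of the surviving part of $R$ is preserved.

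The main obstacle is precisely this consolidation step: it depends on the existence in the chosen $R'$ of a vertex $u\in R$ adjacent to at least two members of $D$. When the bipartite graph between $D$ and $R$ is a disjoint union of stars (so no such $u$ exists at all), the fallback is to change tactics and delete a vertex of $D$ along with a handful of vertices of $R$, creating isolated vertices in the resulting graph and thereby forcing $\gamma_i(G')>\gamma$ through the $st_{id}^{+}$ route instead. Checking that these two strategies together always fit within the budget $n+1-2\gamma$---and, in particular, that the star-union degenerate case really does admit the required isolation within that budget---is the delicate part of the argument, and is where the private-neighbor counting $|R|\ge\gamma$ from the second case is crucial.
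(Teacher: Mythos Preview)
Your constructive strategy is far more elaborate than what the paper does, and it is genuinely incomplete in the places you yourself flag. The paper's argument is a three-line non-constructive one: set $k=st_{id}(G)$; by definition of $k$, deleting any $k-1$ vertices leaves a graph $H$ on $n-k+1$ vertices with $\gamma_i(H)=\gamma_i(G)$; then invoke the standard bound $\gamma_i(H)\le |V(H)|/2$ to get $\gamma_i(G)\le (n-k+1)/2$, which rearranges to $k\le n+1-2\gamma_i(G)$. No explicit deletion set is ever constructed.

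Your route, by contrast, tries to exhibit a set $S$ of size $n+1-2\gamma$ whose removal changes $\gamma_i$, and this is where the difficulties you mention are real and not merely ``delicate bookkeeping''. In your Case~2 you need a vertex $u\in R$ with \emph{exactly} two neighbours in $D$; a vertex with three or more $D$-neighbours does not immediately give an independent set of the form $\{u\}\cup(D\setminus\{v_i,v_j\})$, and you do not say how to handle that. In the star-union fallback you propose to isolate vertices by deleting a member of $D$ and some of $R$, but you give no count showing this stays within the budget $n+1-2\gamma$; for instance in $C_6$ (where the bound is tight, $\gamma=2$, budget $3$) the deletion that works is $\{v_1,v_3,v_5\}$, which is not of the shape ``one vertex of $D$ plus a handful of $R$'' you describe. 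So as it stands the proposal is a plausible plan with unresolved cases, whereas the paper sidesteps all of this by arguing contrapositively via the half-order bound.
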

\begin{proof} 
Set $st_{id}(G)=k$. Thus for every $i$ vertices of $G$  ($1\leq   i \leq   k-1$), say $v_1,\ldots, v_{i}$,  we have $\gamma_{id}(G)=\gamma_{id}(G-v_1)=\cdots = \gamma_{id}(G-v_1-\cdots - v_{i})$.  It is known that   the independent domination  number of a graph is at most equal to  half of its order, so $\gamma_i(G)=\gamma_i(G-v_1-\cdots -v_{k-1})\leq   \frac{n-k+1}{2}$. \qed
\end{proof} 

\begin{corollary}
	Let $G$ be a graph of order $n\geq 2$. If $st_{id}(G)=n-1$, then $\gamma_i(G)= 1$.
\end{corollary}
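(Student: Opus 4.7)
The plan is to apply Theorem \ref{stab&dist} as a black box and simply solve the resulting inequality for $\gamma_i(G)$. Substituting the hypothesis $st_{id}(G)=n-1$ into the bound $st_{id}(G)\leq n+1-2\gamma_i(G)$ yields
\[
n-1\leq n+1-2\gamma_i(G),
\]
which rearranges to $\gamma_i(G)\leq 1$. Since $G$ has at least one vertex, every non-empty graph admits at least one independent dominating set (e.g.\ any maximal independent set), so $\gamma_i(G)\geq 1$. Combining the two inequalities forces $\gamma_i(G)=1$.

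There is essentially no obstacle here; the corollary is a one-line consequence of the preceding theorem. The only thing worth checking is that the lower bound $\gamma_i(G)\geq 1$ is legitimate under the hypothesis $n\geq 2$, which it is (any single vertex of $G$ together with possibly additional vertices forms a non-empty independent dominating set, and an independent dominating set by definition is non-empty). Thus the proof reduces to the substitution above.
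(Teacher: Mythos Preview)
Your proof is correct and matches the paper's approach exactly: the paper also states that the corollary is a direct consequence of Theorem~\ref{stab&dist}, and your substitution and rearrangement simply spell out the one-line computation that the paper leaves implicit.
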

\begin{proof} 
	 It is a direct consequence of Theorem  \ref{stab&dist}. \qed
	 \end{proof}

\begin{theorem}\label{r}
	For any graph $G$ with $\gamma_i(G)\ge 2$ we have $st_{id}(G)\leq \frac{n}{\gamma_i(G)}$.
\end{theorem}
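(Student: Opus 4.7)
The plan is to exhibit a small set whose removal strictly decreases $\gamma_i(G)$, using an averaging argument on a minimum independent dominating set.

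First, I would fix a $\gamma_i$-set $D$ of $G$, so $|D|=\gamma_i(G)$. Since $D$ is a dominating set, $V(G)=\bigcup_{v\in D}N[v]$, hence $n\le\sum_{v\in D}|N[v]|$. Averaging over $D$ gives some $v_0\in D$ with
\[
|N[v_0]|\le\frac{n}{\gamma_i(G)}.
\]

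Next, I would show that $st_{id}(G)\le |N[v_0]|$ by verifying that removing the $|N[v_0]|$ vertices of $N[v_0]$ strictly decreases the independent domination number. Set $G':=G-N[v_0]$ and $D':=D\setminus\{v_0\}$. Note $D'\ne\emptyset$ since $\gamma_i(G)\ge 2$, and $V(G')\ne\emptyset$ because $N[v_0]=V(G)$ would make $\{v_0\}$ an independent dominating set of $G$, contradicting $\gamma_i(G)\ge 2$. I claim $D'$ is an independent dominating set of $G'$: independence is inherited from $D$; for domination, any $w\in V(G')\setminus D'$ satisfies $w\notin D$, so there is $u\in D$ adjacent to $w$, and since $w\notin N[v_0]$ we must have $u\ne v_0$, i.e.\ $u\in D'$. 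Therefore $\gamma_i(G')\le|D'|=\gamma_i(G)-1<\gamma_i(G)$, which means removing the vertices of $N[v_0]$ changes the independent domination number. Hence
\[
st_{id}(G)\le st_{id}^{-}(G)\le|N[v_0]|\le\frac{n}{\gamma_i(G)}.
\]

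There is no genuine obstacle in this argument; the only point requiring care is the verification that $D'$ remains an independent dominating set of $G'$, and in particular that the non-triviality of $G'$ and $D'$ is guaranteed by the hypothesis $\gamma_i(G)\ge 2$. Everything else is pigeonholing on the closed neighborhoods of a $\gamma_i$-set.
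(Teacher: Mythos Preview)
Your averaging step goes in the wrong direction. From $V(G)=\bigcup_{v\in D}N[v]$ you correctly deduce $n\le\sum_{v\in D}|N[v]|$, but this only says that the \emph{average} of the $|N[v]|$ is at least $n/\gamma_i(G)$; pigeonhole then yields a vertex with $|N[v]|\ge n/\gamma_i(G)$, not one with $|N[v_0]|\le n/\gamma_i(G)$. A concrete witness that no such $v_0$ need exist is $G=C_5$: here $\gamma_i(C_5)=2$, so $n/\gamma_i(G)=5/2$, yet every vertex of $C_5$ has $|N[v]|=3>5/2$. Thus for every $v_0$ in any $\gamma_i$-set the bound $|N[v_0]|\le n/\gamma_i(G)$ fails, and your argument collapses at the first step.

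The fix, which is exactly what the paper does, is to replace $N[v_0]$ by the smaller set $\{v_0\}\cup P_{v_0}$, where $P_{v_0}=pn(v_0,D)\cap(V(G)\setminus D)$ is the set of external private neighbours of $v_0$ with respect to $D$. These sets $P_v$ are pairwise disjoint subsets of $V(G)\setminus D$, so $\sum_{v\in D}|P_v|\le n-\gamma_i(G)$, and now averaging in the correct direction produces some $v_0\in D$ with $|P_{v_0}|\le (n-\gamma_i(G))/\gamma_i(G)$; hence $|\{v_0\}\cup P_{v_0}|\le n/\gamma_i(G)$. Your second paragraph then carries over verbatim: $D\setminus\{v_0\}$ is still an independent dominating set of $G-(\{v_0\}\cup P_{v_0})$, because any $w\notin D$ whose only $D$-neighbour is $v_0$ lies in $P_{v_0}$ by definition and has therefore been removed.
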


\begin{proof}
	Let $I$ be an independent dominating set of $G$. We consider the following cases.
	
	\noindent{\bf Case 1.} We  assume that $u$ is  a vertex of $I$ which has no  private neighbor in $V(G)-I$. Then the removal
	of $u$ in $V(G)-I$ decreases the independent domination number, which holds that $st_i(G)\leq \frac{n}{\gamma_i(G)}$.

	\noindent{\bf Case 2.} Suppose that $w$ is   a vertex of $I$ with minimum number of private neighbors in $V(G)-I$. Then the removal
	of $w$ and its private neighbors in $V(G)-I$ decreases the independent domination number, which follows that $st_{id}(G)\leq \frac{n}{\gamma_i(G)}$.\qed
	
\end{proof}

\begin{proposition}{\rm\cite{7}}\label{g}
If $G$ is an isolate-free graph of order $n$, then 
$$\gamma_i(G)\leq n+2-\gamma(G)-\lceil\frac{n}{\gamma(G)}\rceil.$$
 \end{proposition}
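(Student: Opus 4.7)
The plan is to construct an independent dominating set of the claimed size by combining a judiciously chosen vertex of a minimum dominating set with an independent dominating set of the residual subgraph, bounding the latter via the classical Bollob\'as--Cockayne inequality $\gamma_i(H)\le |V(H)|-\gamma(H)$ for isolate-free $H$.

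Let $\gamma=\gamma(G)$ and fix a minimum dominating set $D=\{v_1,\ldots,v_\gamma\}$. Each vertex of $V(G)\setminus D$ has at least one neighbour in $D$, so assigning every such vertex to one of its $D$-neighbours produces a partition $V(G)=A_1\cup\cdots\cup A_\gamma$ with $v_i\in A_i\subseteq N[v_i]$. Since $\sum_i|A_i|=n$, pigeonhole yields some $A_i$ with $|A_i|\ge\lceil n/\gamma\rceil$; after relabelling, $|N[v_\gamma]|\ge|A_\gamma|\ge\lceil n/\gamma\rceil$.

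Set $H=G-N[v_\gamma]$, so $|V(H)|\le n-\lceil n/\gamma\rceil$. Note that $\gamma(H)\ge\gamma-1$, since any dominating set $D_H$ of $H$ extends to a dominating set $D_H\cup\{v_\gamma\}$ of $G$. Provided $H$ is isolate-free, Bollob\'as--Cockayne gives $\gamma_i(H)\le|V(H)|-\gamma(H)\le(n-\lceil n/\gamma\rceil)-(\gamma-1)$. If $I_H$ is a minimum independent dominating set of $H$, then $\{v_\gamma\}\cup I_H$ is independent (since $I_H\subseteq V(G)\setminus N[v_\gamma]$, it avoids the closed neighbourhood of $v_\gamma$) and dominates $V(G)$ (since $v_\gamma$ covers $N[v_\gamma]$ while $I_H$ covers $V(H)$), whence
$$\gamma_i(G)\le 1+\gamma_i(H)\le n+2-\gamma-\lceil n/\gamma\rceil.$$

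The principal obstacle is the case in which $H$ has isolated vertices, where Bollob\'as--Cockayne does not directly apply; for instance $G=K_{2,3}$ with $v_\gamma$ on the larger part produces such an $H$. Any such isolate must lie outside $N[v_\gamma]$ yet have all its $G$-neighbours inside $N(v_\gamma)$, so it is a ``satellite'' of $v_\gamma$ that is forced into every independent dominating set of $H$. I would deal with this by refining the choice of $v_\gamma$ so as to minimise the number of satellites (or by absorbing them explicitly into the constructed independent dominating set of $G$) and then verifying via direct counting that the final bound is still respected. This book-keeping in the isolate case is the step where I expect the proof to be most delicate.
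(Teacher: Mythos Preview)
The paper does not prove this proposition at all: it is quoted verbatim from the Goddard--Henning survey~\cite{7} and then used as a black box in the proof of Proposition~\ref{h}. So there is no ``paper's own proof'' against which to compare your argument.

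On the merits of your attempt: the main line (pick a vertex of a $\gamma$-set whose closed neighbourhood has size at least $\lceil n/\gamma\rceil$, delete it, and invoke $\gamma_i(H)\le |V(H)|-\gamma(H)$ on the remainder) is the natural one and gives the bound cleanly when $H$ is isolate-free. However, the gap you flag is genuine and your sketch does not close it. If $S$ is the set of isolated vertices of $H$ and $H'=H-S$, then the set $\{v_\gamma\}\cup S\cup I_{H'}$ you would build is indeed an independent dominating set of $G$, but the lower bound on $\gamma(H')$ you can extract is only $\gamma-1-|S|$ (a dominating set of $H'$ together with $v_\gamma$ need \emph{not} dominate $S$, since each satellite's neighbours lie in $N(v_\gamma)$, not in $\{v_\gamma\}$). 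Plugging this into $\gamma_i(H')\le |V(H')|-\gamma(H')$ yields
\[
\gamma_i(G)\le 1+|S|+\bigl(|V(H)|-|S|\bigr)-\bigl(\gamma-1-|S|\bigr)=n+2-\gamma-\lceil n/\gamma\rceil+|S|,
\]
which overshoots by exactly $|S|$. Your $K_{2,3}$ example already shows $|S|>0$ can occur even when $D$ is chosen with the external-private-neighbour property, so ``refining the choice of $v_\gamma$'' alone will not remove the obstacle. As written, the proposal is a plausible outline but not yet a proof; you would need a substantively different handling of the satellites (or a different organising inequality) to finish.
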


 \begin{proposition}{\rm\cite{8}}\label{l}
For every connected graph $G\neq K_1$ we have $\gamma(G)\leq \frac{n}{2}$.
 \end{proposition}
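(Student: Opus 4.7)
The plan is to follow Ore's classical approach: show that if $D$ is a minimum dominating set of a graph $G$ without isolated vertices, then $V(G)\setminus D$ is itself a dominating set. This immediately yields $\gamma(G)\le |V(G)\setminus D|=n-\gamma(G)$, and hence $\gamma(G)\le n/2$. The hypothesis that $G$ is connected with $G\neq K_1$ is used only to guarantee that $G$ has no isolated vertices.

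First, I would fix any $\gamma$-set $D$ of $G$; because $|D|$ is minimum, $D$ is in particular a minimal dominating set, so for every $v\in D$ the set $D\setminus\{v\}$ fails to dominate some vertex $u_v\in V(G)$. The key step is then to show that every $v\in D$ has at least one neighbor in $V(G)\setminus D$. I would split into two cases. If $u_v=v$, then $v$ has no neighbor in $D$; since $v$ is not isolated, it must have a neighbor in $V(G)\setminus D$. If $u_v\neq v$, then necessarily $u_v\notin D$ and $u_v$'s unique neighbor in $D$ is $v$, so again $v$ has a neighbor $u_v\in V(G)\setminus D$.

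Once this claim is in hand, every vertex of $D$ is adjacent to some vertex in $V(G)\setminus D$, and every vertex of $V(G)\setminus D$ is already in $V(G)\setminus D$, so $V(G)\setminus D$ dominates $G$. Combining $\gamma(G)\le |V(G)\setminus D|$ with $|D|=\gamma(G)$ gives $2\gamma(G)\le n$, which is the desired bound. The only real obstacle is the two-case analysis showing that the complement of a minimum dominating set dominates; apart from that the argument is a one-line arithmetic manipulation.
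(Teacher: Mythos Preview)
Your argument is correct and is precisely Ore's classical proof that any graph without isolated vertices satisfies $\gamma(G)\le n/2$. Note, however, that the paper does not supply its own proof of this proposition: it is quoted as a known result from \cite{8} (Haynes--Hedetniemi--Slater) and stated without proof, so there is no ``paper's approach'' to compare against. Your write-up would serve perfectly well as a self-contained justification should one be desired.
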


\begin{proposition}\label{h}
If  $k \in \mathbb{N}$ and $\gamma_i(G)\ge k > 1$, then there is no isolate-free graph $G$ of order $n$ with $\gamma_i(G)\ge 2$ and $st_{id}(G)=n-k$.
\end{proposition}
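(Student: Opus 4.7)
The plan is to deduce the proposition directly from Theorem \ref{stab&dist}. I argue by contradiction: suppose that for some integer $k>1$ there exists an isolate-free graph $G$ of order $n$ with $\gamma_i(G)\ge k$ and $st_{id}(G)=n-k$. Theorem \ref{stab&dist} gives the bound $st_{id}(G)\le n+1-2\gamma_i(G)$, so substituting and using $\gamma_i(G)\ge k$ I obtain
$$n-k \;=\; st_{id}(G) \;\le\; n+1-2\gamma_i(G) \;\le\; n+1-2k,$$
which rearranges to $k\le 1$ and contradicts $k>1$. Hence no such $G$ can exist.

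The argument is essentially a one-line algebraic manipulation once Theorem \ref{stab&dist} is in hand, so there is no serious technical obstacle. The only point requiring care is the interpretation of the statement: the two conditions $\gamma_i(G)\ge 2$ and $\gamma_i(G)\ge k$ appearing in the hypothesis should be read together as the single requirement $\gamma_i(G)\ge k$, which is strictly the stronger of the two when $k>1$. Observe also that the previous corollary (the case $k=1$, which forces $\gamma_i(G)=1$) slots in exactly at the boundary $k=1$, so this proposition extends that corollary to all larger $k$.

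Propositions \ref{g} and \ref{l} stated immediately above, which bound $\gamma_i$ in terms of $\gamma$ and bound $\gamma$ by $n/2$, do not appear to be needed for this particular deduction. If one preferred a more structural route bypassing Theorem \ref{stab&dist}, one could instead examine the induced subgraph $H=G-v_1-\cdots-v_{n-k-1}$ obtained by removing any $n-k-1$ vertices; by definition of $st_{id}$, $\gamma_i(H)=\gamma_i(G)\ge k$, while $|V(H)|=k+1$, which forces $H$ to be so sparse (essentially edgeless on its surviving vertices) that combining this with the isolate-free hypothesis on $G$ and the bounds of Propositions \ref{g}--\ref{l} yields the same contradiction. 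I would present only the short Theorem \ref{stab&dist} proof, since it is immediate.
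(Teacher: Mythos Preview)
Your argument is correct: once Theorem \ref{stab&dist} is available, the inequality $n-k\le n+1-2\gamma_i(G)\le n+1-2k$ immediately forces $k\le 1$, and the proposition follows. It is, however, a genuinely different route from the paper's own proof.

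The paper does not invoke Theorem \ref{stab&dist} here at all. Instead it argues via Theorem \ref{r} (the bound $st_{id}(G)\le n/\gamma_i(G)$), feeding in Propositions \ref{g} and \ref{l} to manufacture the contradictory inequality $n-k>n/\gamma_i(G)$. This is precisely why those two propositions are quoted immediately before the statement, a placement you correctly found puzzling. Your deduction is shorter and more transparent; the paper's version has the virtue of tying the result to the sharper bound $n/\gamma_i(G)$ rather than the cruder $n+1-2\gamma_i(G)$, but at the cost of a considerably more tangled chain of inequalities. Your observation that the previous corollary is the boundary case $k=1$ is also a nice structural remark that the paper does not make explicit.
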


\begin{proof}
	Assume that $G$ is an isolate-free graph with $\gamma_i(G)\ge 2$ and $st_{id}(G)=n-k$. By Proposition \ref{g}, we have $n > \frac{nk}{\gamma_i(G)} \ge \frac{k(\gamma_i(G)-2+\gamma(G)+\frac{n} {\gamma(G)})}{\gamma_i(G)}$. Using Proposition \ref{l}, it can be seen that $n-k=st_{id}(G) > \frac{k\frac{n}{2}}{\gamma_i(G)}$. Since $k\ge 2$, then $n-k=st_{id}(G) > \frac{n}{\gamma_i(G)}$, contradicting Theorem \ref{r}.\qed
\end{proof}

\begin{proposition}\label{q}
 For any graph $G$ with $\gamma_i(G)\ge 2$, we have 
 \[
 st_{id}(G) \leq  \min\{\delta(G)+1, n-\delta(G)-1\}.
 \]
\end{proposition}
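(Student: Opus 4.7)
The plan is to prove the two bounds in the minimum separately. The first bound $st_{id}(G) \le \delta(G)+1$ is already established by Observation \ref{delta}, so nothing more needs to be said there. The real content is the second bound $st_{id}(G) \le n - \delta(G) - 1$, and I would handle it by exhibiting an explicit vertex removal that forces the independent domination number to drop to $1$.

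Specifically, I would let $v$ be a vertex with $\deg(v) = \delta(G)$, and consider the set $S = V(G) \setminus N[v]$, which has cardinality $n - \delta(G) - 1$. In the induced subgraph $G - S = G[N[v]]$, the vertex $v$ is adjacent to every other vertex, so $\{v\}$ is simultaneously independent and dominating in $G[N[v]]$. Hence $\gamma_i(G[N[v]]) = 1$. Since we are assuming $\gamma_i(G) \ge 2$, removing $S$ strictly decreases the independent domination number, and therefore $st_{id}(G) \le |S| = n - \delta(G) - 1$.

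I would then only need a brief sanity check that $|S| \ge 1$: if $\delta(G) = n - 1$, then $G = K_n$ and $\gamma_i(G) = 1$, contradicting the hypothesis $\gamma_i(G) \ge 2$. So $\delta(G) \le n-2$, which makes $n - \delta(G) - 1 \ge 1$ and the removal non-vacuous. Combining this with Observation \ref{delta} gives the claimed $\min$.

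I do not anticipate a serious obstacle here; the argument is essentially a one-step construction. The only thing to be careful about is verifying that $\{v\}$ really is an independent dominating set of $G[N[v]]$ (which is immediate), and ensuring that the hypothesis $\gamma_i(G) \ge 2$ is actually used, both to guarantee the drop in $\gamma_i$ and to rule out the degenerate case $G = K_n$.
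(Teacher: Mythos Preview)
Your argument is correct. The paper takes a slightly different route to the bound $n-\delta(G)-1$: it fixes a $\gamma_i$-set $I$, first disposes of the case where some $u\in I$ has no external private neighbor (then deleting $u$ already drops $\gamma_i$, so $st_{id}(G)=1$), and otherwise passes to the induced subgraph on $\{v\}\cup epn(v,I)$ for a vertex $v\in I$, which again has independent domination number $1$. Your choice of a minimum-degree vertex $v$ and the subgraph $G[N[v]]$ is more direct: it avoids the case split and the private-neighbor machinery, and the size of the deleted set is visibly $n-\delta(G)-1$ rather than something that must be extracted from the structure of $I$ (a step the paper leaves implicit). Both proofs rest on the same mechanism---manufacturing an induced subgraph with a universal vertex so that $\gamma_i$ collapses to $1$---so the difference is one of packaging, but yours is the cleaner package and also makes the use of the hypothesis $\gamma_i(G)\ge 2$ (to rule out $K_n$ and guarantee a genuine drop) more transparent.
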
 

\begin{proof} 
	Let $I$ be an independent dominating set. Suppose that  there is a vertex $u \in I$ such that $epn(u,I)=\emptyset$, so  $st_{id}(G)=1$. Now, we  assume that for each vertex $u\in I$ we have $epn(u,I)\neq\emptyset$. Suppose that $v \in I$, and let $X$ be the
set of private neighbors of $v$ in $V(G)-I$. Therefore $\gamma_i([G(X \cup \{v\}])=1<\gamma_i(G)$. By Observation \ref{delta}, we have the result. \qed
\end{proof}

Here, we shall state Nordhaus–Gaddum type inequalities for the sum of the independent domination stability of a graph and its
complement. Note that if $\gamma_i(G)=1$, then $st_{id}(\overline{G})=1$, and thus we obtain the following result.

\begin{observation}
If $G$ is a graph of order $n$ with $\gamma_i(G)=1$ or $\gamma_i(\overline{G})=1$, then $st_{id}(G)+st_i(\overline{G})\leq n+1$, and this bound is sharp for the complete graphs.
\end{observation}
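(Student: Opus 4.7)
The plan is to reduce the statement to the case $\gamma_i(G)=1$ by symmetry, verify the parenthetical remark that this forces $st_{id}(\overline{G})=1$, and then combine with the universal bound $st_{id}(G)\le n$ recalled in Section~2.

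First, since the hypothesis is symmetric in $G$ and $\overline{G}$, I would assume without loss of generality that $\gamma_i(G)=1$. Then $G$ has a universal vertex $v$, and in $\overline{G}$ this same $v$ is isolated. Because an isolated vertex must lie in every independent dominating set (nothing else can dominate it), every $\gamma_i$-set of $\overline{G}$ contains $v$, and removing $v$ gives
\[
\gamma_i(\overline{G}) \;=\; 1+\gamma_i(\overline{G}-v),
\]
so deleting the single vertex $v$ changes $\gamma_i(\overline{G})$. Therefore $st_{id}(\overline{G})=1$.

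Next, I would invoke the universal upper bound $st_{id}(G)\le n$ noted in the definition paragraph of Section~2 (with equality iff $G=K_n$), and simply add the two inequalities to obtain $st_{id}(G)+st_{id}(\overline{G})\le n+1$, which is the desired bound. For sharpness I would take $G=K_n$: then $\gamma_i(K_n)=1$, $st_{id}(K_n)=n$ by the cited characterization, while the argument above applied to $G=K_n$ gives $st_{id}(\overline{K_n})=1$, and the sum equals $n+1$.

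I do not foresee any real obstacle. The only step that needs a sentence of justification is the implication ``$\gamma_i(G)=1\ \Rightarrow\ st_{id}(\overline{G})=1$'' via the isolated-vertex observation; once that is in place, the rest is an addition of two previously established inequalities and a one-line verification of equality at $K_n$.
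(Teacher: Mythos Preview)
Your proposal is correct and follows exactly the approach the paper uses: the paper's entire argument is the one-line remark ``if $\gamma_i(G)=1$, then $st_{id}(\overline{G})=1$'' placed just before the Observation, combined implicitly with the trivial bound $st_{id}(G)\le n$ from Section~2. Your write-up merely supplies the justification (via the isolated vertex in $\overline{G}$) that the paper leaves to the reader, and your sharpness check at $K_n$ matches the paper's claim.
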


\begin{theorem}
    If $G$ is a graph with $\gamma_i(G)\ge 2$ and $\gamma_i(\overline{G})\ge 2$, then
  $$st_{id}(G)+st_{id}(\overline{G})\leq\left\{
 \begin{array}{cc}
n     &\quad   n=2k\\
n-1   &\quad   n=2k+1

 \end{array}\right.
 $$
 \end{theorem}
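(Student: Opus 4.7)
The plan is to derive the bound as a direct consequence of Proposition \ref{q} by applying it to both $G$ and $\overline{G}$ and then comparing the two bounds through the elementary identities $\delta(\overline{G}) = n-1-\Delta(G)$ and $\Delta(\overline{G}) = n-1-\delta(G)$.

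First I would invoke Proposition \ref{q}, which is available because the hypotheses $\gamma_i(G) \geq 2$ and $\gamma_i(\overline{G}) \geq 2$ are precisely what that proposition requires. This gives
\[
st_{id}(G)\leq \min\{\delta(G)+1,\ n-\delta(G)-1\}
\]
and, applied to $\overline{G}$ together with $\delta(\overline{G}) = n-1-\Delta(G)$,
\[
st_{id}(\overline{G})\leq \min\{n-\Delta(G),\ \Delta(G)\}.
\]

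Next I would carry out the elementary extremal analysis of each term. Writing $a = \delta(G)$, the function $a \mapsto \min\{a+1, n-a-1\}$ is bounded above by $\lfloor n/2\rfloor$: its two linear pieces meet near $a = (n-2)/2$, and a case check on the parity of $n$ shows the maximum is exactly $n/2$ when $n$ is even and $(n-1)/2$ when $n$ is odd. Similarly, writing $b = \Delta(G)$, the function $b \mapsto \min\{b, n-b\}$ attains maximum $n/2$ for $n$ even and $(n-1)/2$ for $n$ odd.

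Adding the two inequalities then yields
\[
st_{id}(G) + st_{id}(\overline{G}) \leq 2\lfloor n/2 \rfloor,
\]
which equals $n$ for even $n$ and $n-1$ for odd $n$, matching the two cases of the stated bound. The argument is essentially a one-line reduction to Proposition \ref{q} plus a parity computation; the only delicate point is keeping the $+1$/$-1$ shifts in the two minima aligned so that the two contributions really do sum to the claimed value rather than exceeding it by one in the odd case, which is why the separate treatment of even and odd $n$ appears in the statement.
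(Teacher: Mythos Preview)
Your argument is correct. Applying Proposition \ref{q} to both $G$ and $\overline{G}$, and then bounding each of the two minima by $\lfloor n/2\rfloor$ (using integrality of $\delta(G)$ and $\Delta(G)$), indeed gives the claimed bound in both parities.

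Your route differs from the paper's in the odd case. For $n=2k+1$ the paper does not use Proposition \ref{q}; instead it invokes Theorem \ref{r}, which together with the hypothesis $\gamma_i\ge 2$ gives $st_{id}(G)\le n/2$ and hence, by integrality, $st_{id}(G)\le (n-1)/2$ (and similarly for $\overline{G}$). For $n=2k$ the paper proceeds essentially as you do, bounding each $\min\{\delta+1,\,n-\delta-1\}$ by $n/2$, though without rewriting the $\overline{G}$ bound via $\delta(\overline{G})=n-1-\Delta(G)$. Your approach has the advantage of uniformity, handling both parities with a single tool, while the paper's splitting shows that either of the two earlier bounds (Theorem \ref{r} or Proposition \ref{q}) is already strong enough to yield the Nordhaus--Gaddum inequality in the respective parity.
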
 
   \begin{proof} 
   	If  $n=2k+1$ for some integer $k$, then by Theorem \ref{r} we have $st_{id}(G)+st_{id}(\overline{G})\leq \frac{n-1}{2}+\frac{n-1}{2} =n-1$. Now, we suppose that $n=2k$.
Using Proposition \ref{q}, we observe that 
\begin{eqnarray*}
st_{id}(G)+st_{id}(\overline{G})&\leq& min\{\delta(G) + 1, n-\delta(G)-1\}+ min\{\delta(\overline{G}) + 1, n-\delta(\overline{G})-1\} \\
&&\leq \frac{n}{2}+\frac{n}{2}=n.
\end{eqnarray*}\qed
\end{proof}

\section{$id$-stability of some operations of two graphs}
In this section, we study the independent domination stability of some operations of two graphs. First we consider the join of two graphs. 
The join $ G_1+ G_2$ of two graphs $G_1$ and $G_2$ with disjoint vertex sets $V_1$ and $V_2$ and edge sets $E_1$ and $E_2$ is the graph union $G_1\cup G_2$ together with all the edges joining $V_1$ and $V_2$. 

\begin{observation}\label{join}
	If  $G_1$ and $G_2$ are  nonempty graphs, then $$\gamma_i(G_1+G_2)=\min\{\gamma_i(G_1),\gamma_i(G_2)\}.$$
	\end{observation}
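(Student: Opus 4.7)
The plan is to establish the equality $\gamma_i(G_1+G_2)=\min\{\gamma_i(G_1),\gamma_i(G_2)\}$ by proving the two inequalities separately, and both directions follow directly from the definition of the join together with the defining properties of an independent dominating set.

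For the upper bound $\gamma_i(G_1+G_2)\le\min\{\gamma_i(G_1),\gamma_i(G_2)\}$, I would take a $\gamma_i$-set $S$ of $G_1$ (the argument for $G_2$ is symmetric) and check that $S$ remains an independent dominating set in $G_1+G_2$. Independence of $S$ is preserved because the join adds no edges inside $V(G_1)$. For domination, $S$ already dominates $V(G_1)$ in $G_1$, and, since $G_2$ is nonempty while every vertex of $V(G_2)$ is joined to every vertex of $V(G_1)$ (in particular to any chosen vertex of $S$, which is nonempty), $S$ also dominates all of $V(G_2)$. This yields $|S|=\gamma_i(G_1)\ge\gamma_i(G_1+G_2)$, and symmetrically for $G_2$.

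For the reverse inequality, I would take a $\gamma_i$-set $T$ of $G_1+G_2$ and exploit independence to force $T$ to lie entirely in one of the two parts. Indeed, if $T$ contained both some $u\in V(G_1)$ and some $w\in V(G_2)$, then $uw\in E(G_1+G_2)$ by the definition of the join, contradicting independence. So $T\subseteq V(G_1)$ or $T\subseteq V(G_2)$; assume without loss of generality $T\subseteq V(G_1)$. Then $T$ is independent in $G_1$, and the only edges in $G_1+G_2$ available to dominate vertices of $V(G_1)\setminus T$ are the edges of $G_1$ itself, so $T$ must already dominate $V(G_1)\setminus T$ within $G_1$. Hence $T$ is an independent dominating set of $G_1$, giving $|T|\ge\gamma_i(G_1)\ge\min\{\gamma_i(G_1),\gamma_i(G_2)\}$.

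I do not anticipate a genuine obstacle; the only item requiring care is the nonemptiness hypothesis, which is used in the upper-bound direction to guarantee that picking any vertex of $S\subseteq V(G_1)$ suffices to dominate all of the (nonempty) opposite part $V(G_2)$, and dually. Everything else is immediate from the definitions of join, independence, and domination.
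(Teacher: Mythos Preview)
Your proof is correct and follows the same idea as the paper's argument, which simply observes that a $\gamma_i$-set of either $G_1$ or $G_2$ serves as a $\gamma_i$-set of $G_1+G_2$. In fact, your version is more complete: you explicitly supply the reverse inequality by noting that any independent set in the join must lie entirely within one part, a step the paper leaves implicit.
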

	\begin{proof}
		By the definition, every  $\gamma_i$-set  $D_1$ of  $G_1$ (or $\gamma_i$-set $D_2$ of  $G_2$), is a $\gamma_i$-set of $G_1+G_2$. So we have result.\qed
	\end{proof} 
	
	By Observation \ref{join}, we have the following result.
	
	\begin{theorem}
		If  $G_1$ and $G_2$ are  nonempty graphs, then $$st_{id}(G_1+G_2)=\min\{st_{id}(G_1),st_{id}(G_2)\}.$$
					\end{theorem}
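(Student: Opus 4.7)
My plan is to leverage Observation \ref{join} twice. For any deletion set $S\subseteq V(G_1+G_2)$, write $S=S_1\cup S_2$ with $S_i\subseteq V(G_i)$. Provided neither side is fully emptied, the induced subgraph on the complement is exactly $(G_1-S_1)+(G_2-S_2)$, so Observation \ref{join} gives
\[
\gamma_i\bigl((G_1+G_2)-S\bigr)=\min\{\gamma_i(G_1-S_1),\,\gamma_i(G_2-S_2)\}.
\]
This reduces the problem of tracking $\gamma_i$ under vertex removals in the join to tracking it componentwise in $G_1$ and $G_2$.

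Without loss of generality assume $st_{id}(G_1)\le st_{id}(G_2)$. For the lower bound, I would take any $S$ with $|S|<st_{id}(G_1)\le st_{id}(G_2)$ and decompose $S=S_1\cup S_2$. Since $|S_i|\le|S|<st_{id}(G_i)$, removing $S_i$ from $G_i$ does not change $\gamma_i(G_i)$, so $\gamma_i((G_1+G_2)-S)=\min\{\gamma_i(G_1),\gamma_i(G_2)\}=\gamma_i(G_1+G_2)$, yielding $st_{id}(G_1+G_2)\ge st_{id}(G_1)$. For the matching upper bound, I would pick a minimum witness $T\subseteq V(G_1)$ for $st_{id}(G_1)$ and check that removing $T$ from $G_1+G_2$ alters $\min\{\gamma_i(G_1),\gamma_i(G_2)\}$; the verification is a short case split on whether $\gamma_i(G_1-T)$ lies below or above $\gamma_i(G_1)$, combined with the position of $\gamma_i(G_2)$ relative to $\gamma_i(G_1)$.

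I expect the main obstacle to be exactly this upper-bound case split when $\gamma_i(G_1)=\gamma_i(G_2)$: if $T$ only raises $\gamma_i(G_1)$ while $\gamma_i(G_2)$ is untouched, the minimum can remain unchanged and no deletion is detected on the $G_1$-side. To patch this I would either use the symmetric witness in $V(G_2)$ (available because in this configuration $st_{id}(G_2)=st_{id}(G_1)$ by the WLOG), or split the argument according to $st_{id}^-$ and $st_{id}^+$ so that on at least one side a witness is guaranteed to drag the minimum. One should also handle the boundary case $T=V(G_1)$ separately, where $(G_1+G_2)-T=G_2$ and the change in $\gamma_i$ must be read off directly from the comparison of $\gamma_i(G_2)$ with $\gamma_i(G_1+G_2)$.
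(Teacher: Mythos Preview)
Your lower-bound argument is sound, and you have correctly isolated the obstruction on the upper-bound side: when $\gamma_i(G_1)=\gamma_i(G_2)$ and a minimum witness $T$ for $st_{id}(G_1)$ only \emph{increases} $\gamma_i(G_1)$, the quantity $\min\{\gamma_i(G_1-T),\gamma_i(G_2)\}$ does not move. Neither patch you propose resolves this. Passing to the symmetric witness in $G_2$ runs into the identical problem, and splitting into $st_{id}^-$ versus $st_{id}^+$ does not help either, since it may happen that $st_{id}^-(G_j)>st_{id}^+(G_j)=st_{id}(G_j)$ for \emph{both} $j=1,2$, so no decreasing witness of the required size exists on either side.

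In fact the stated equality is false. Take $G_1=G_2=P_3$: here $\gamma_i(P_3)=1$ and $st_{id}(P_3)=1$ (remove the center vertex, which raises $\gamma_i$ to $2$). In $P_3+P_3$, deleting any single vertex leaves the center of the other copy universal, so $\gamma_i$ stays equal to $1$; hence $st_{id}(P_3+P_3)\ge 2$, while deleting both centers yields $\overline{K_2}+\overline{K_2}=K_{2,2}$ with $\gamma_i=2$, giving $st_{id}(P_3+P_3)=2\ne 1=\min\{st_{id}(P_3),st_{id}(P_3)\}$. The paper's one-line appeal to Observation~\ref{join} does not address this case either; what survives from your argument is the correct inequality $st_{id}(G_1+G_2)\ge\min\{st_{id}(G_1),st_{id}(G_2)\}$.
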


Here, we recall the definition of lexicographic product of two graphs.  
 For two graphs $G$ and $H$, let $G[H]$ be the graph with vertex
 set $V(G)\times V(H)$ and such that vertex $(a,x)$ is adjacent to vertex $(b,y)$ if and only if
 $a$ is adjacent to $b$ (in $G$) or $a=b$ and $x$ is adjacent to $y$ (in $H$). The graph $G[H]$ is the
 lexicographic product (or composition) of $G$ and $H$, and can be thought of as the graph arising from $G$ and $H$ by substituting a copy of $H$ for every vertex of $G$ (\cite{DAM}).
 
 The following theorem gives the independent domination number of $G[H]$. 
 
 \begin{theorem}\label{lexico}
 If  $G$ and $H$ are two  graphs, then $$\gamma_i(G[H]) =\gamma_i(G)\gamma_i(H).$$
 \end{theorem}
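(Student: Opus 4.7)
The plan is a two-inequality argument, with the nontrivial work lying in the lower bound. I would label the copy of $H$ attached to $a\in V(G)$ by $H_a$, so that $V(G[H])$ partitions into $\{a\}\times V(H)$ for $a\in V(G)$.

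For the upper bound $\gamma_i(G[H])\le \gamma_i(G)\gamma_i(H)$, I would take $\gamma_i$-sets $D_G\subseteq V(G)$ and $D_H\subseteq V(H)$ and put $D=D_G\times D_H$. Independence is immediate from the adjacency rule of the lexicographic product: two distinct vertices of $D$ either sit in the same copy $H_a$ and project to the independent set $D_H$, or in different copies $H_a,H_b$ with $a,b\in D_G$ non-adjacent in $G$. Domination is equally direct: a vertex $(v,w)$ with $v\in D_G$ is handled inside $H_v$ by $D_H$, and a vertex $(v,w)$ with $v\notin D_G$ has a neighbor $a\in D_G$, and then $(a,x)$ is adjacent to $(v,w)$ for every $x\in D_H$.

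For the lower bound, let $D$ be any $\gamma_i$-set of $G[H]$, set $D_a=\{x\in V(H):(a,x)\in D\}$, and let $A=\{a\in V(G):D_a\ne\emptyset\}$. I would first check that $A$ is an independent dominating set of $G$: independence because any edge $ab$ with $a,b\in A$ would give adjacent vertices $(a,x),(b,y)\in D$; domination because a vertex $v\notin A$, together with any fixed $w\in V(H)$, forces $(v,w)$ to be dominated in $G[H]$ by some $(a,x)\in D$, and since $D_v=\emptyset$ we must have $a\ne v$, hence $av\in E(G)$ with $a\in A$.

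The crucial step, and the one that makes the bound tight rather than loose, is showing that for every $a\in A$ the set $D_a$ is already an independent dominating set of $H$ (not merely an independent set that is partially helped from outside). Given $w\in V(H)$, I consider $(a,w)$ and its dominator $(b,x)\in D$. If $b=a$ we get $x\in D_a$ with $x=w$ or $xw\in E(H)$, which is what we need. If $b\ne a$, then $ab\in E(G)$ and $D_b\ne\emptyset$ forces $b\in A$, contradicting the independence of $A$ that we just established. This rules out any ``cross-copy'' domination of vertices in $H_a$, so $D_a$ must dominate $H$ on its own; combined with $|A|\ge\gamma_i(G)$, summing $|D|=\sum_{a\in A}|D_a|\ge |A|\cdot\gamma_i(H)\ge\gamma_i(G)\gamma_i(H)$ closes the argument. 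The main obstacle is precisely this interplay — one has to use the independence of $A$ to confine domination of each copy $H_a$ to within that copy.
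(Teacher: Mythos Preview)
Your argument is correct and follows the same underlying idea as the paper: build a candidate from $\gamma_i$-sets $D_G$ and $D_H$ via $D_G\times D_H$. The difference is one of completeness rather than strategy. The paper's proof is a one-sentence sketch asserting that a minimum independent dominating set of $G[H]$ arises from this product construction; it does not justify the lower bound at all. You supply exactly the missing half: projecting an arbitrary $\gamma_i$-set $D$ of $G[H]$ to $A\subseteq V(G)$, showing $A$ is itself an independent dominating set of $G$, and then using the independence of $A$ to force each fibre $D_a$ to dominate its copy of $H$ internally. That last step---ruling out cross-copy domination via the independence of $A$---is precisely what the paper omits, and your treatment of it is clean.
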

\begin{proof}
An independent dominating set in $G[H]$ of minimum cardinality,  arises by choosing an independent dominating
set in $G$ of cardinality $\gamma_i(G)$,  and then, within each copy of $H$ in $G[H]$, 
choosing an independent dominating set in $H$ with cardinality $\gamma_i(H)$. Thus, we have the result. \qed	
	\end{proof} 

By Theorem \ref{lexico}, we have the following result.

\begin{corollary}
 If  $G$ and $H$ are two  graphs, then $st_{id}(G[H])=\min\{st_{id}(G),st_{id}(H)\}$. 
\end{corollary}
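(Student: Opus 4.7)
The plan is to derive the corollary from Theorem \ref{lexico}, which supplies the factorization $\gamma_i(G[H]) = \gamma_i(G)\gamma_i(H)$, and to prove the two inequalities of the minimum separately.

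For the upper bound $st_{id}(G[H]) \leq \min\{st_{id}(G), st_{id}(H)\}$, I would assume without loss of generality that $st_{id}(H) \leq st_{id}(G)$, and let $S \subseteq V(H)$ with $|S| = st_{id}(H)$ witness $\gamma_i(H - S) \neq \gamma_i(H)$. Fix any $a_0 \in V(G)$ and delete $T = \{a_0\} \times S$ from $G[H]$, a set of the required size. The next step is to adapt the construction in the proof of Theorem \ref{lexico} to $G[H] - T$: an independent dominating set can be built by choosing a $\gamma_i$-set $A$ of $G$ containing $a_0$, a $\gamma_i$-set of $H$ inside each copy indexed by $A \setminus \{a_0\}$, and a $\gamma_i$-set of $H - S$ inside the $a_0$-copy. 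Assuming this construction is optimal, $\gamma_i(G[H] - T) = (\gamma_i(G) - 1)\gamma_i(H) + \gamma_i(H - S) \neq \gamma_i(G)\gamma_i(H)$, so $T$ is a valid change set. A symmetric argument, exchanging the roles of $G$ and $H$, establishes $st_{id}(G[H]) \leq st_{id}(G)$.

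For the lower bound, take any $T \subseteq V(G[H])$ with $|T| < \min\{st_{id}(G), st_{id}(H)\}$. Decompose $T$ by slices $T_a = \{x \in V(H) : (a,x) \in T\}$ for $a \in V(G)$. The key observation is that each $|T_a|$ is strictly below $st_{id}(H)$, so $\gamma_i(H - T_a) = \gamma_i(H)$ on every partially deleted copy, and the number of fully emptied copies is strictly below $st_{id}(G)$, so the underlying $G$-structure on the surviving copies still has independent domination number $\gamma_i(G)$. Combining these copywise equalities via the factorization argument of Theorem \ref{lexico} should yield $\gamma_i(G[H] - T) = \gamma_i(G)\gamma_i(H) = \gamma_i(G[H])$, completing the lower bound.

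The main obstacle I anticipate is that $G[H] - T$ is in general not itself a lexicographic product, so Theorem \ref{lexico} cannot be applied to it verbatim; the target value $\gamma_i(G)\gamma_i(H)$ must be recovered by a separate argument that tracks how minimum independent dominating sets of $G[H] - T$ decompose across copies. This copy-by-copy bookkeeping is the delicate step, since a vertex deleted in one copy may still be dominated ``across copies'' via its $G$-neighbors, and one has to rule out this cross-copy interaction altering the product formula (or collapsing it below the expected value) before the threshold $\min\{st_{id}(G), st_{id}(H)\}$ is crossed. Small test cases should be checked first to be sure the factorized construction is genuinely optimal on $G[H] - T$.
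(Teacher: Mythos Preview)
Your proposal cannot be completed because the corollary itself is false, and the gap sits exactly where you flagged it with ``assuming this construction is optimal.'' Take $G=H=K_2$. Then $G[H]=K_4$, so $st_{id}(G[H])=4$ by the paper's remark that $st_{id}(K_n)=n$, whereas $\min\{st_{id}(G),st_{id}(H)\}=\min\{2,2\}=2$. More generally $K_m[K_n]=K_{mn}$, so the claimed equality fails whenever $mn>\min\{m,n\}$. In your upper-bound step, the set you build inside $G[H]-T$ is only a candidate independent dominating set, giving an inequality $\gamma_i(G[H]-T)\le(\gamma_i(G)-1)\gamma_i(H)+\gamma_i(H-S)$, not the equality you need; and when $\gamma_i(H-S)>\gamma_i(H)$ (or $H-S$ is empty) that inequality says nothing useful. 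In the $K_2[K_2]$ example your $T=\{a_0\}\times V(K_2)$ deletes an entire copy and leaves $K_2$, whose $\gamma_i$ is still $1=\gamma_i(K_4)$, so $T$ does not change the parameter. Two further unjustified side-assumptions are that some $\gamma_i$-set of $G$ contains your chosen $a_0$ (false already for a leaf of $P_3$), and that a ``symmetric argument'' handles the $G$-side, although the lexicographic product is not symmetric in its factors.

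For context, the paper offers no argument at all: it simply asserts the corollary as a consequence of the product formula $\gamma_i(G[H])=\gamma_i(G)\gamma_i(H)$. So there is nothing to compare your attempt to; the assertion is erroneous, and your instinct to test small cases before trusting the factorized construction was the right one.
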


Now, we obtain the $id$-stability of corona of two graphs. We first state and prove the following theorem.
\begin{theorem}\label{corona}
	If  $G$ and $H$ are two  graphs, then $\gamma_i(G\circ H) =|V(G)|\gamma_i(H)$.
\end{theorem}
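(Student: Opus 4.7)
The plan is to prove the two directions separately: the upper bound by an explicit construction of a small independent dominating set, and the lower bound by a local analysis, block by block, of a minimum independent dominating set of the corona.

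To fix notation, write $V(G)=\{v_1,\ldots,v_n\}$ and let $H_1,\ldots,H_n$ denote the $n$ copies of $H$ used in forming $G\circ H$, so that $v_i$ is adjacent to every vertex of $H_i$ and there are no edges between $V(H_i)\cup\{v_i\}$ and $V(H_j)\cup\{v_j\}$ for $i\ne j$ other than those already present in $G$ among the $v_i$'s. For the upper bound, I would pick, for each $i$, a $\gamma_i$-set $D_i$ of $H_i$ and set $D=\bigcup_{i=1}^n D_i$. The set $D$ is independent because each $D_i$ is independent in $H_i$ and there are no edges between the different $H_i$'s in $G\circ H$. Every vertex of $V(H_i)\setminus D_i$ has a neighbor in $D_i$ by the choice of $D_i$, and every $v_i$ is adjacent to all of $V(H_i)$ and hence is dominated by any element of $D_i$. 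Thus $D$ is an independent dominating set and $\gamma_i(G\circ H)\le n\,\gamma_i(H)$.

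For the lower bound, let $S$ be any minimum independent dominating set of $G\circ H$ and set $S_i=S\cap V(H_i)$. The central local claim to prove is $|S_i|\ge \gamma_i(H)$ for every $i$. If $v_i\notin S$, then every $u\in V(H_i)\setminus S_i$ must be dominated by some vertex of $S$; its neighbors in $G\circ H$ outside $V(H_i)$ consist only of $v_i$, which is not in $S$, so the dominator lies inside $S_i$. Hence $S_i$ is a dominating set of $H_i$, and as a subset of the independent set $S$ it is an independent dominating set of $H_i$, giving $|S_i|\ge \gamma_i(H)$. Summing this over all $i$ for which $v_i\notin S$ would then give the desired bound.

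The main obstacle is the case $v_i\in S$: by independence of $S$ we then have $S_i=\emptyset$, so the block $\{v_i\}\cup V(H_i)$ contributes only $1$ to $|S|$, which is generally smaller than $\gamma_i(H)$. The plan is to rule this out by a local exchange. Starting from a minimum $S$, if some $v_i$ lies in $S$ I would replace $v_i$ by a $\gamma_i$-set $D_i$ of $H_i$ and verify that the resulting set is still independent (no vertex of $D_i$ is adjacent to anything else in $S$, since those live in other blocks) and still dominates $G\circ H$ (the block $\{v_i\}\cup V(H_i)$ is dominated by $D_i$; the $G$-neighbors of $v_i$ must be checked to remain dominated, which is the delicate point handled inside their own blocks). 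Iterating this swap produces a minimum independent dominating set $S^\star$ containing no $v_i$, so the case-1 argument applies to every block of $S^\star$ and yields $|S^\star|\ge n\,\gamma_i(H)$. The hardest step will be controlling the exchange so that dominators of the $v_j$'s adjacent to $v_i$ in $G$ are not simultaneously destroyed; I would resolve this by ordering the swaps and exploiting the minimality of $S$ at each stage.
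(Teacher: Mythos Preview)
Your upper bound is correct and coincides with what the paper does. For the lower bound you correctly isolate the only obstruction: a minimum independent dominating set $S$ may contain some $v_i$, in which case $S\cap V(H_i)=\emptyset$ and that block contributes only $1$ to $|S|$. Your proposed repair, however, does not work. Replacing $v_i$ by a $\gamma_i$-set $D_i$ of $H_i$ \emph{increases} the size of the set by $\gamma_i(H)-1$; the resulting $S^\star$ is still an independent dominating set, but there is no reason for it to be minimum. Hence the inequality $|S^\star|\ge n\,\gamma_i(H)$ tells you nothing about $|S|$, and the ``delicate point'' you mention about neighbours of $v_i$ is not the real issue.

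In fact the statement is false as written, so no repair is possible. Take $G=K_2$ with $V(G)=\{v_1,v_2\}$ and $H=C_4$, so that $|V(G)|\,\gamma_i(H)=2\cdot 2=4$. If $D_2$ is any $\gamma_i$-set of the copy $H_2$, then $\{v_1\}\cup D_2$ is independent (since $v_1$ has no neighbour in $H_2$) and dominates everything: $v_1$ covers $v_2$ and all of $H_1$, while $D_2$ covers $H_2$. Thus $\gamma_i(G\circ H)\le 3<4$. An even smaller example is $G=K_1$, $H=2K_1$, where $G\circ H\cong P_3$ has $\gamma_i=1$ but $|V(G)|\,\gamma_i(H)=2$. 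The paper's one-sentence proof only asserts the upper-bound construction and never addresses the case $v_i\in S$ that you correctly flagged; the gap is in the theorem itself, not merely in your argument.
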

\begin{proof}
	An independent dominating set in $G\circ H$ of minimum cardinality,  arises by choosing an independent dominating
	set with minimum cardinality in  each copy of $H$ in $G\circ H$.  So we have the result. \qed	
\end{proof} 

By Theorem \ref{corona}, we have the following result.

\begin{corollary}
	If  $G$ and $H$ are two  graphs, then $st_{id}(G\circ H)=1$. 
	\end{corollary}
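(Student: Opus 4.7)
My plan is to show $st_{id}(G\circ H)\le 1$; since $st_{id}\ge 1$ for any graph with at least one vertex, this gives equality. The starting point is Theorem \ref{corona}, which fixes $\gamma_i(G\circ H)=|V(G)|\,\gamma_i(H)$, so the task reduces to exhibiting a single vertex of $G\circ H$ whose removal alters this value.

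The natural candidate is a vertex $u$ lying in one of the $|V(G)|$ copies of $H$, say the $j$-th copy $H_j$. After deletion the graph $(G\circ H)-u$ still carries a corona-like decomposition: the $j$-th copy is replaced by $H-u$, while the other $|V(G)|-1$ copies remain intact and are each still joined to their own vertex of $G$. Running the proof of Theorem \ref{corona} through this modified structure — picking a minimum independent dominating set in each of the $|V(G)|-1$ unchanged copies and a minimum IDS of $H-u$ in the $j$-th copy — one obtains
\[
\gamma_i\bigl((G\circ H)-u\bigr) \;=\; (|V(G)|-1)\,\gamma_i(H) \;+\; \gamma_i(H-u),
\]
which differs from $|V(G)|\,\gamma_i(H)$ precisely when $\gamma_i(H-u)\neq \gamma_i(H)$. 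Hence the key task is to exhibit a vertex $u\in V(H)$ with $\gamma_i(H-u)\neq\gamma_i(H)$: for almost every nontrivial $H$ this is immediate (a leaf, support vertex, or any vertex whose private neighborhood in some $\gamma_i$-set is nonempty will work).

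The main obstacle is the degenerate case in which $H$ is itself $i$-stable, so that no single-vertex deletion inside a copy of $H$ changes $\gamma_i(H)$. In that situation I would switch strategies and exploit the central vertex $v_j\in V(G)$: choose $u$ so that $v_j$ can be inserted into an independent dominating set of $(G\circ H)-u$, replacing the entire IDS of $H_j$ (possible because $v_j$ is adjacent to every vertex of $H_j-u$). One then verifies that this substitution produces an IDS strictly smaller than $|V(G)|\,\gamma_i(H)$, while no analogous saving was available in the original $G\circ H$. Completing this verification — checking independence with the neighboring $v_i$'s in $G$ and tracking the contribution from the other copies — is the subtle step.
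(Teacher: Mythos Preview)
Your approach differs from the paper's and is considerably more involved. The paper does not delete a vertex from a copy of $H$ at all; it deletes a vertex $v$ belonging to $G$. Since Theorem~\ref{corona} gives $\gamma_i(G\circ H)=|V(G)|\,\gamma_i(H)$, removing such a $v$ decreases $|V(G)|$ and hence, the paper argues, changes $\gamma_i$. That is the entire proof---no case split, no search for an $i$-critical vertex of $H$.

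Your route, by contrast, is left genuinely incomplete. You yourself flag the obstacle: if $\gamma_i(H-u)=\gamma_i(H)$ for every $u\in V(H)$ (which happens, for instance, when $H=K_n$ with $n\ge 2$, where the paper records $st_{id}(K_n)=n$), your first argument yields nothing. Your proposed fallback---inserting the central vertex $v_j$ into an independent dominating set of $(G\circ H)-u$---is only sketched; you do not verify independence with the neighbouring vertices of $v_j$ in $G$, nor show that the resulting set is strictly smaller than $|V(G)|\,\gamma_i(H)$. So the proposal has a real gap precisely in the case you label ``subtle,'' and the paper's choice of deleting from $G$ rather than from a copy of $H$ is exactly what sidesteps it.
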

\begin{proof}
	By Theorem \ref{corona}, $\gamma_i(G\circ H) =|V(G)|\gamma_i(H)$, so by removing one vertex from $G$, the order of $G$ and so   $\gamma_i(G\circ H)$ will be changed. Therefore, we have the result. 
	\end{proof} \\

\noindent{\bf Acknowledgement.} 
The work of Hamidreza Golmohammadi and Alexey Zakharov is supported by the
Mathematical Center in Akademgorodok, under agreement No. 075-15-2022-281 with
the Ministry of Science and High Education of the Russian Federation.

\end{document}